\newtheorem{lem}[subsection]{Lemma}
\newtheorem{prop}[subsection]{Proposition}
\newtheorem{conj}[subsection]{Conjecture}
\newtheorem{thm}[subsection]{Theorem}
\theoremstyle{definition}
\theoremstyle{remark}
\newcommand{\nc}{\newcommand}
\nc{\renc}{\renewcommand} \nc{\ssec}{\subsection}
\nc{\sssec}{\subsubsection}
\nc{\on}{\operatorname} \nc{\wh}{\widehat}
\nc\ol{\overline} \nc\ul{\underline} \nc\wt{\widetilde}
\nc{\BA}{{\mathbb{A}}} \nc{\BC}{{\mathbb{C}}}
\nc{\BD}{{\mathbb{D}}} \nc{\BG}{{\mathbb{G}}} \nc{\BQ}{{\mathbb{Q}}}
\nc{\BM}{{\mathbb{M}}} \nc{\BN}{{\mathbb{N}}}
\nc{\BP}{{\mathbb{P}}} \nc{\BR}{{\mathbb{R}}}
\nc{\BZ}{{\mathbb{Z}}} \nc{\BS}{{\mathbb{S}}} \nc{\BW}{{\mathbb{W}}}
\nc{\CA}{{\mathcal{A}}} \nc{\CB}{{\mathcal{B}}} \nc{\CalD}{{\mathcal{D}}}
\nc{\CE}{{\mathcal{E}}} \nc{\CF}{{\mathcal{F}}}
\nc{\CG}{{\mathcal{G}}} \nc{\CH}{{\mathcal{H}}}
\nc{\CI}{{\mathcal{I}}} \nc{\CK}{{\mathcal{K}}} \nc{\CL}{{\mathcal{L}}}
\nc{\CM}{{\mathcal{M}}} \nc{\CN}{{\mathcal{N}}}
\nc{\CO}{{\mathcal{O}}} \nc{\CP}{{\mathcal{P}}}
\nc{\CQ}{{\mathcal{Q}}} \nc{\CR}{{\mathcal{R}}}
\nc{\CS}{{\mathcal{S}}} \nc{\CT}{{\mathcal{T}}}
\nc{\CU}{{\mathcal{U}}} \nc{\CV}{{\mathcal{V}}}  \nc{\CY}{{\mathcal Y}}
\nc{\CW}{{\mathcal{W}}} \nc{\CZ}{{\mathcal{Z}}}
\nc{\cM}{{\check{\mathcal M}}{}} \nc{\csM}{{\check{\mathcal A}}{}}
\nc{\oM}{{\overset{\circ}{\mathcal M}}{}}
\nc{\obM}{{\overset{\circ}{\mathbf M}}{}}
\nc{\oCA}{{\overset{\circ}{\mathcal A}}{}}
\nc{\obA}{{\overset{\circ}{\mathbf A}}{}}
\nc{\ooM}{{\overset{\circ}{M}}{}}
\nc{\osM}{{\overset{\circ}{\mathsf M}}{}}
\nc{\vM}{{\overset{\bullet}{\mathcal M}}{}}
\nc{\nM}{{\underset{\bullet}{\mathcal M}}{}}
\nc{\oD}{{\overset{\circ}{\mathcal D}}{}}
\nc{\obD}{{\overset{\circ}{\mathbf D}}{}}
\nc{\oA}{{\overset{\circ}{\mathbb A}}{}}
\nc{\op}{{\overset{\bullet}{\mathbf p}}{}}
\nc{\cp}{{\overset{\circ}{\mathbf p}}{}}
\nc{\oU}{{\overset{\bullet}{\mathcal U}}{}}
\nc{\ofZ}{{\overset{\circ}{\mathfrak Z}}{}}
\nc{\ff}{{\mathfrak{f}}} \nc{\fv}{{\mathfrak{v}}}
\nc{\fa}{{\mathfrak{a}}} \nc{\fb}{{\mathfrak{b}}}
\nc{\fd}{{\mathfrak{d}}} \nc{\fe}{{\mathfrak{e}}}
\nc{\fg}{{\mathfrak{g}}} \nc{\fgl}{{\mathfrak{gl}}}
\nc{\fh}{{\mathfrak{h}}} \nc{\fri}{{\mathfrak{i}}}
\nc{\fj}{{\mathfrak{j}}} \nc{\fk}{{\mathfrak{k}}} \nc{\fl}{{\mathfrak{l}}}
\nc{\fm}{{\mathfrak{m}}} \nc{\fn}{{\mathfrak{n}}}
\nc{\ft}{{\mathfrak{t}}} \nc{\fu}{{\mathfrak{u}}}
\nc{\fw}{{\mathfrak{w}}} \nc{\fz}{{\mathfrak{z}}}
\nc{\fp}{{\mathfrak{p}}} \nc{\frr}{{\mathfrak{r}}}
\nc{\fs}{{\mathfrak{s}}} \nc{\fsl}{{\mathfrak{sl}}}
\nc{\hsl}{{\widehat{\mathfrak{sl}}}}
\nc{\hgl}{{\widehat{\mathfrak{gl}}}}
\nc{\hg}{{\widehat{\mathfrak{g}}}}
\nc{\chg}{{\widehat{\mathfrak{g}}}{}^\vee}
\nc{\hn}{{\widehat{\mathfrak{n}}}}
\nc{\chn}{{\widehat{\mathfrak{n}}}{}^\vee}
\nc{\fA}{{\mathfrak{A}}} \nc{\fB}{{\mathfrak{B}}} \nc{\fC}{{\mathfrak{C}}}
\nc{\fD}{{\mathfrak{D}}} \nc{\fE}{{\mathfrak{E}}}
\nc{\fF}{{\mathfrak{F}}} \nc{\fG}{{\mathfrak{G}}} \nc{\fH}{{\mathfrak{H}}}
\nc{\fI}{{\mathfrak{I}}} \nc{\fJ}{{\mathfrak{J}}}
\nc{\fK}{{\mathfrak{K}}} \nc{\fL}{{\mathfrak{L}}}
\nc{\fM}{{\mathfrak{M}}} \nc{\fN}{{\mathfrak{N}}}
\nc{\frP}{{\mathfrak{P}}} \nc{\fQ}{{\mathfrak{Q}}}
\nc{\fT}{{\mathfrak{T}}} \nc{\fU}{{\mathfrak{U}}}
\nc{\fV}{{\mathfrak{V}}} \nc{\fW}{{\mathfrak{W}}}
\nc{\fX}{{\mathfrak{X}}} \nc{\fY}{{\mathfrak{Y}}}
\nc{\fZ}{{\mathfrak{Z}}}
\nc{\ba}{{\mathbf{a}}}
\nc{\bb}{{\mathbf{b}}} \nc{\bc}{{\mathbf{c}}}
\nc{\be}{{\mathbf{e}}} \nc{\bj}{{\mathbf{j}}}
\nc{\bn}{{\mathbf{n}}} \nc{\bp}{{\mathbf{p}}}
\nc{\bq}{{\mathbf{q}}} \nc{\br}{{\mathbf{r}}} \nc{\bt}{{\mathbf{t}}}
\nc{\bfu}{{\mathbf{u}}} \nc{\bv}{{\mathbf{v}}}
\nc{\bx}{{\mathbf{x}}} \nc{\by}{{\mathbf{y}}}
\nc{\bw}{{\mathbf{w}}} \nc{\bA}{{\mathbf{A}}}
\nc{\bB}{{\mathbf{B}}} \nc{\bC}{{\mathbf{C}}}
\nc{\bD}{{\mathbf{D}}} \nc{\bF}{{\mathbf{F}}}
\nc{\bH}{{\mathbf{H}}} \nc{\bJ}{{\mathbf{J}}} \nc{\bK}{{\mathbf{K}}}
\nc{\bM}{{\mathbf{M}}} \nc{\bN}{{\mathbf{N}}}
\nc{\bO}{{\mathbf{O}}} \nc{\bS}{{\mathbf{S}}} \nc{\bT}{{\mathbf{T}}}
\nc{\bV}{{\mathbf{V}}} \nc{\bW}{{\mathbf{W}}}
\nc{\bX}{{\mathbf{X}}}
\nc{\bY}{{\mathbf{Y}}} \nc{\bP}{{\mathbf{P}}}
\nc{\bZ}{{\mathbf{Z}}} \nc{\bh}{{\mathbf{h}}}
\nc{\sA}{{\mathsf{A}}} \nc{\sB}{{\mathsf{B}}}
\nc{\sC}{{\mathsf{C}}} \nc{\sD}{{\mathsf{D}}}
\nc{\sE}{{\mathsf{E}}} \nc{\sF}{{\mathsf{F}}} \nc{\sG}{{\mathsf{G}}}
\nc{\sI}{{\mathsf{I}}} \nc{\sK}{{\mathsf{K}}} \nc{\sL}{{\mathsf{L}}}
\nc{\sfm}{{\mathsf{m}}} \nc{\sM}{{\mathsf{M}}} \nc{\sO}{{\mathsf{O}}}
\nc{\sQ}{{\mathsf{Q}}} \nc{\sP}{{\mathsf{P}}}
\nc{\sT}{{\mathsf{T}}} \nc{\sZ}{{\mathsf{Z}}}
\nc{\sV}{{\mathsf{V}}} \nc{\sW}{{\mathsf{W}}}
\nc{\sfp}{{\mathsf{p}}} \nc{\sr}{{\mathsf{r}}}
\nc{\st}{{\mathsf{t}}} \nc{\sfb}{{\mathsf{b}}}
\nc{\sfc}{{\mathsf{c}}} \nc{\sd}{{\mathsf{d}}}
\nc{\sz}{{\mathsf{z}}}
\nc{\tA}{{\widetilde{\mathbf{A}}}}
\nc{\tB}{{\widetilde{\mathcal{B}}}}
\nc{\tg}{{\widetilde{\mathfrak{g}}}} \nc{\tG}{{\widetilde{G}}}
\nc{\TM}{{\widetilde{\mathbb{M}}}{}}
\nc{\tO}{{\widetilde{\mathsf{O}}}{}}
\nc{\tU}{{\widetilde{\mathfrak{U}}}{}} \nc{\TZ}{{\tilde{Z}}}
\nc{\tx}{{\tilde{x}}} \nc{\tbv}{{\tilde{\bv}}}
\nc{\tfP}{{\widetilde{\mathfrak{P}}}{}} \nc{\tz}{{\tilde{\zeta}}}
\nc{\tmu}{{\tilde{\mu}}}
\nc{\urho}{\underline{\rho}} \nc{\uB}{\underline{B}}
\nc{\uC}{{\underline{\mathbb{C}}}} \nc{\ui}{\underline{i}}
\nc{\uj}{\underline{j}} \nc{\ofP}{{\overline{\mathfrak{P}}}}
\nc{\oB}{{\overline{\mathcal{B}}}}
\nc{\og}{{\overline{\mathfrak{g}}}} \nc{\oI}{{\overline{I}}}
\nc{\eps}{\varepsilon} \nc{\hrho}{{\hat{\rho}}}
\nc{\blambda}{{\bar\lambda}} \nc{\bmu}{{\bar\mu}} \nc{\bnu}{{\bar\nu}}
\nc{\one}{{\mathbf{1}}} \nc{\two}{{\mathbf{t}}}
\nc{\Sym}{{\mathop{\operatorname{\rm Sym}}}}
\nc{\Tot}{{\mathop{\operatorname{\rm Tot}}}}
\nc{\Spec}{{\mathop{\operatorname{\rm Spec}}}}
\nc{\Ker}{{\mathop{\operatorname{\rm Ker}}}}
\nc{\Hilb}{{\mathop{\operatorname{\rm Hilb}}}}
\nc{\End}{{\mathop{\operatorname{\rm End}}}}
\nc{\Ext}{{\mathop{\operatorname{\rm Ext}}}}
\nc{\Hom}{{\mathop{\operatorname{\rm Hom}}}}
\nc{\CHom}{{\mathop{\operatorname{{\mathcal{H}}\it om}}}}
\nc{\GL}{{\mathop{\operatorname{\rm GL}}}}
\nc{\gr}{{\mathop{\operatorname{\rm gr}}}}
\nc{\Id}{{\mathop{\operatorname{\rm Id}}}}
\nc{\defi}{{\mathop{\operatorname{\rm def}}}}
\nc{\length}{{\mathop{\operatorname{\rm length}}}}
\nc{\supp}{{\mathop{\operatorname{\rm supp}}}}
\nc{\HC}{{\mathcal H}{\mathcal C}}
\nc{\Cliff}{{\mathsf{Cliff}}}
\nc{\Fl}{{\mathcal{F}\ell}} \nc{\Fib}{{\mathsf{Fib}}}
\nc{\Coh}{{\mathsf{Coh}}} \nc{\FCoh}{{\mathsf{FCoh}}}
\nc{\reg}{{\text{\rm reg}}}
\nc{\gvee}{{\mathfrak g}^{\!\scriptscriptstyle\vee}}
\nc{\tvee}{{\mathfrak t}^{\!\scriptscriptstyle\vee}}
\nc{\nvee}{{\mathfrak n}^{\!\scriptscriptstyle\vee}}
\nc{\bvee}{{\mathfrak b}^{\!\scriptscriptstyle\vee}}
       \nc{\rhovee}{\rho^{\!\scriptscriptstyle\vee}}
\nc{\cplus}{{\mathbf{C}_+}} \nc{\cminus}{{\mathbf{C}_-}}
\nc{\cthree}{{\mathbf{C}_*}} \nc{\Qbar}{{\bar{Q}}}
\nc{\Gtimes}{\vphantom{j^{X^2}}\smash{\overset{G}{\vphantom{\rule{0pt}{0.30em}}\smash{\times}}}}
\nc{\sGtimes}{\vphantom{j^{X^2}}\smash{\overset{\mathsf G}{\vphantom{\rule{0pt}{0.30em}}\smash{\times}}}}
\nc{\bOmega}{{\overline{\Omega}}}
\nc{\seq}[1]{\stackrel{#1}{\sim}}
\nc{\aff}{{\operatorname{aff}}}
\nc{\fin}{{\operatorname{fin}}}
\nc{\Gr}{{\operatorname{Gr}}}
\nc{\GR}{{\mathbf{Gr}}}
\nc{\Perv}{{\operatorname{Perv}}}
\nc{\Rep}{{\operatorname{Rep}}}
\nc{\IC}{{\operatorname{IC}}}
\nc{\Bun}{{\operatorname{Bun}}}
\nc{\Proj}{{\operatorname{Proj}}}
\nc{\pt}{{\operatorname{pt}}}
\nc{\bfmu}{{\boldsymbol{\mu}}}
\nc{\bfomega}{{\boldsymbol{\omega}}}
\nc{\calM}{\mathcal M}
\nc{\calA}{\mathcal A}
\nc{\calO}{\mathcal O}
\nc{\CC}{\mathbb C}
\nc{\calN}{\mathcal N}
\nc{\grg}{\mathfrak g}
\nc{\tslash}{/\!\!/\!\!/}
\nc\grt{\mathfrak t}
\nc\bfM{\mathbf M}
\nc\bfN{\mathbf N}
\nc\Sig{\Sigma}
\nc\ZZ{\mathbb{Z}}
\nc\calC{\mathcal C}
\nc\calF{\mathcal F}
\nc\calX{\mathcal X}
\nc\calY{\mathcal Y}
\nc\QCoh{\operatorname{QCoh}}
\nc\IndCoh{\operatorname{IndCoh}}
\nc\Maps{\operatorname{Maps}}
\nc\Dmod{D\operatorname{-mod}}
\nc{\calD}{\mathcal D}
\nc\bfO{\mathbf O}
\renewcommand{\AA}{\mathbb A}
\nc\GG{\mathbb G}
\nc\calK{\mathcal K}
\nc{\calG}{\mathcal G}
\nc\RHom{\operatorname{RHom}}
\nc\grs{\mathfrak s}
\nc{\tilX}{\widetilde X}
\nc\calB{\mathcal B}
\nc\calS{\mathcal S}
\nc\calT{\mathcal T}
\nc\calZ{\mathcal Z}
\nc\LS{\operatorname{LocSys}}
\nc\bfL{\on{\mathbf L}}
\nc\calE{\on{\mathcal E}}
\nc\calW{\on{\mathcal W}}
\nc\kap{\kappa}
\nc\x{\times}
\nc\calL{\mathcal L}
\nc\Lam{\Lambda}
\nc\lam{\lambda}
\nc\SL{\on{SL}}
\nc\oGr{\overline{\Gr}}
\nc\bfc{\mathbf c}
\nc\bfa{\mathbf a}
\nc\PGL{\on{PGL}}
\nc\VV{\mathbb V}
\nc\tilF{\widetilde{F}}
\nc\triv{\on{triv}}
\renewcommand\Coh{\operatorname{Coh}}
\newcommand{\dbkts}[1]{[\![#1]\!]}
\newcommand{\dprts}[1]{(\!(#1)\!)}
\nc\bfH{\mathbf H}
\renewcommand\SS{\mathbb S}
\begin{document}
\author[A.~Braverman]{Alexander Braverman}
\address{
Department of Mathematics,
University of Toronto and Perimeter Institute of Theoretical Physics,
Waterloo, Ontario, Canada, N2L 2Y5
\newline
Skolkovo Institute of Science and Technology;
}
\email{braval@math.toronto.edu}
\author[M.~Finkelberg]{Michael Finkelberg}
\address{
National Research University Higher School of Economics, Russian Federation\newline
Department of Mathematics, 6 Usacheva st., Moscow 119048;\newline
Skolkovo Institute of Science and Technology;\newline
Institute for Information Transmission Problems}
\email{fnklberg@gmail.com}

\title
%[Dva Idiota]
{A quasi-coherent description of the category $\Dmod(\Gr_{\GL(n)})$}

\dedicatory{To Sasha Beilinson and Vitya Ginzburg}

%\thanks{{\bf Mathematics Subject Classification (2000).}
%19E08, (22E65, 37K10).}

%\thanks{{\bf Key words.} $q$-difference Toda lattice, Equivariant
%$K$-theory, Laumon compactification.}

%\thanks{The work of L.R. was partially supported
%by  RFBR grants 07-01-92214-CNRSL-a and 05-01-02805-CNRSL-a.
%L.R. gratefully acknowledges the support from Deligne 2004 Balzan
%prize in mathematics.}

\begin{abstract}
In \cite{cime} we have formulated a conjecture describing the derived category $\Dmod(\Gr_{\GL(n)})$ of (all) $D$-modules on the affine Grassmannian of the group $\GL(n)$ as the category of quasi-coherent sheaves on a certain stack (it is explained in {\em loc.\ cit.} that this conjecture ``follows" naturally from some heuristic arguments involving 3-dimensional quantum field theory). In this paper we prove a weaker version of this conjecture for the case $n=2$.
\end{abstract}
\maketitle

\section{Introduction and statement of the results}
\subsection{General notation}
In general we work over $\CC$.

For a (derived) stack $\calY$ we denote by $\QCoh(\calY)$ the derived category of quasi-coherent sheaves on $\calY$ and by $\Dmod(\calY)$ the derived category of $D$-modules on $\calY$. In addition we are going to denote by $\IndCoh(\calY)$ the derived category of ind-coherent sheaves on $\calY$;
this category coincides with $\QCoh(\calY)$ when $\calY$ is a classical (non-derived) smooth stack but in general the two are different (we are going to use \cite{ArGa} as our main reference for the notion and properties of ind-coherent sheaves).

Let $\calO=\CC\dbkts{z},\calK=\CC\dprts{z}$. Set $\calD=\Spec(\calO),\calD^*=\Spec(\calK)$.
By a local system of rank $n$ on $\calD^*$ we shall mean a vector bundle $\calE$ on $\calD^*$ of rank $n$
endowed with a connection $\nabla\colon \calE\to \calE\otimes \Omega^1_{\calD^*}$.
We denote by $\LS_n(\calD^*)$ the stack of local systems of rank $n$ on $\calD^*$.

For an algebraic group $G$ over $\CC$ we denote by $\Gr_G=G(\calK)/G(\calO)$ the affine Grassmannian of $G$ (viewed as an ind-scheme).

\subsection{The main conjecture: $\GL(n)$-case}

Let $\calW_n$ denote the stack which classifies the following data:

\medskip
(1) A local system $\calE_i$ on $\calD^*$ of rank $i$ for any $i=1,\ldots,n$.

(2) A morphism $\kap_i\colon \calE_i\to\calE_{i+1}$ for any $i<n$.

\medskip
\noindent
This stack maps naturally to the stack $\LS_n(\calD^*))$ (this map sends the above data to $\calE_n$). The trivial local system defines a map
$\pt/\GL(n)\to \calW_n$ and we let $\calW_n^{\triv}$ denote the cartesian product
$$
\begin{CD}
\calW_n^{\triv} @>>> \calW_n\\
@VVV @VVV\\
\pt/\GL(n)@>>> \LS_n(\calD^*).
\end{CD}
$$
It is worthwhile to note that $\calW_n^{\triv}$ is a dg-stack.

The following is a slightly corrected version of a conjecture formulated in \cite{cime}:
\begin{conj}\label{main-gln}
The category $\IndCoh(\calW_n^{\triv})$ is equivalent to the category $\Dmod(\Gr_{\GL(n)})$. This equivalence respects the natural action of the tensor category $\Rep(\GL(n))$ on both sides.
\end{conj}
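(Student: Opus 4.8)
\noindent\emph{Proposed strategy, for $n=2$.}
The plan is to describe both categories explicitly and to identify each with dg-modules over one and the same (formal) dg-algebra, keeping track throughout of the natural $\Rep(\GL(2))$-actions: on $\Dmod(\Gr_{\GL(2)})$ this is the geometric Satake convolution action, and on $\IndCoh(\calW_2^{\triv})$ it is pullback along $\calW_2^{\triv}\to\pt/\GL(2)$. Since $\Rep(\GL(2))$-module categories are the same thing as categories with $\GL(2)$-action (de-equivariantization), it suffices to prove a $\GL(2)$-equivariant equivalence after $-\otimes_{\Rep(\GL(2))}\mathrm{Vect}$. On the coherent side this replaces $\calW_2^{\triv}$ by its $\GL(2)$-cover $\calW_2':=\calW_2\times_{\LS_2(\calD^*)}\pt$, the dg-stack classifying a rank-one local system $\calE_1$ on $\calD^*$ together with a flat map $\kap_1\colon\calE_1\to\calO_{\calD^*}^{\oplus2}$ into the (now genuinely trivialized) rank-two local system; so $\calW_2'\to\LS_1(\calD^*)$ is the total space of the two-term complex $R\Gamma_{\mathrm{dR}}(\calD^*,\calE_1^{\vee})^{\oplus2}$. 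On the $D$-module side one uses the determinant map $\Gr_{\GL(2)}\to\Gr_{\GG_m}=\ZZ$ to present $\Gr_{\GL(2)}$ as a $\ZZ$-family of copies of $\Gr_{\SL(2)}$ with a twisted loop-group action, so that $\Dmod(\Gr_{\GL(2)})$ is reconstructed from $\Dmod(\Gr_{\SL(2)})$ and $\Dmod(\Gr_{\GG_m})=\bigoplus_{\ZZ}\mathrm{Vect}$.

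For $\Gr_{\SL(2)}$ I would take as generators the $!$- and $*$-extensions of constant sheaves from the strata of the Iwahori (equivalently $\GL(2)(\calO)$-) stratification, whose closures carry Bott--Samelson resolutions by iterated $\BP^1$-bundles, and compute the graded dg-algebra $R$ of derived endomorphisms of their direct sum; by purity and parity vanishing this algebra is formal, and $\Dmod(\Gr_{\SL(2)})$ is identified with a category of dg-$R$-modules. Threading this back through the $\det$-family and re-equivariantizing yields a presentation of $\Dmod(\Gr_{\GL(2)})$ over the corresponding ``$\GL(2)$-version'' of $R$.

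On the coherent side one first records that, classically, a flat map out of a rank-one local system factors through the flat sections of $\calO_{\calD^*}^{\oplus2}$, so the underlying classical stack of $\calW_2'$ has precisely the locus $\kap_1=0$ (on which $\calE_1$ varies freely in $\LS_1(\calD^*)$) and the locus $\kap_1\neq0$ (on which $\calE_1\cong\calO_{\calD^*}$ and $\kap_1$ is a nonzero vector in $\CC^2$), and similarly for $\calW_2^{\triv}$ after dividing by $\GL(2)$. The genuinely derived content is the Koszul differential coming from the fiber product over $\LS_2(\calD^*)$, governed by the de Rham complex $[\calK\xrightarrow{\,d\,}\calK\,dz]$ of $\calD^*$; carrying this out presents $\calW_2^{\triv}$ as a quotient of an explicit two-term affine dg-scheme by a solvable group. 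A Koszul-type duality then identifies $\IndCoh(\calW_2^{\triv})$ with dg-modules over a formal dg-algebra $R'$, and the crux is the identification $R'\simeq R$ as $A_\infty$-algebras compatibly with the $\Rep(\GL(2))$-actions; derived geometric Satake is a useful cross-check, as it already matches the $\GL(2)(\calO)$-equivariant substructure of $\Dmod(\Gr_{\GL(2)})$ with $\IndCoh\big(\pt\times_{\mathfrak{gl}_2/\GL(2)}\pt\big)$-type data.

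The main obstacle — and the reason only a weaker version is obtained — is the derived and infinite-type bookkeeping: since $\calD^*$ is of infinite type, $\LS_2(\calD^*)$ and $\calW_2^{\triv}$ are honestly derived and ``large'', so one must be careful about which version of $\IndCoh$ is used, about the convergence of the completions implicit in the Koszul-duality step, and about comparing the two $A_\infty$-algebras exactly (with all higher products and the equivariant structure) rather than merely up to quasi-isomorphism of the underlying complexes. I expect essential surjectivity to be the easy half (both sides are generated by the images of the explicit stratum-generators, which are matched directly), and the full faithfulness — the on-the-nose comparison $R'\simeq R$ together with its $\Rep(\GL(2))$-equivariance — to be the hard half; accordingly I would first establish the equivalence on the subcategory of coherent (bounded) objects, where both sides are manifestly governed by the same finite data, and only afterwards ind-complete and reinstate the $\GL(2)$-action, leaving the sharpest structural compatibilities and the case of general $n$ to the remaining conjecture.
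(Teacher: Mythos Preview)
The statement you are addressing is a \emph{conjecture} in the paper; it is not proved there, not even for $n=2$. What the paper actually establishes is the strictly weaker \emph{fiberwise} statement (Theorem~\ref{thm}): both sides are viewed as living over $\LS_1(\calD^*)$ via the $\calK^\times$-action through $\eta$, and the fibers are matched one at a time. So there is no ``paper's own proof'' of the full conjecture to compare against; one can only compare your route to the paper's route toward the weaker result.

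With that in mind, your strategy has a genuine gap that the paper's approach sidesteps. You propose to generate $\Dmod(\Gr_{\SL(2)})$ by $!$- and $*$-extensions of constant sheaves on Iwahori (or $\GL(2,\calO)$) strata and to recover the category from their Ext-algebra. But the conjecture concerns \emph{all} $D$-modules on $\Gr_{\GL(2)}$, not the Schubert-constructible ones; your generators only produce the Iwahori-constructible subcategory, and the Ext-algebra $R$ you compute governs only that part. No amount of formality or Koszul duality will recover the non-constructible $D$-modules from $R$. (Your parenthetical ``Iwahori (equivalently $\GL(2)(\calO)$-) stratification'' is also not right: the two stratifications differ, though the constructible categories they define happen to coincide here.)

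The paper's method is organized around a different reduction. Rather than de-equivariantizing by $\Rep(\GL(2))$, it de-equivariantizes along $\LS_1(\calD^*)$: one passes to $(\calK^\times,\calL)$-equivariant $D$-modules for each character sheaf $\calL$. At the trivial fiber this becomes $\calO^\times$-equivariant $D$-modules on $\Gr_{\SL(2)}$, where the $\calO^\times$-orbits are finite-dimensional and \emph{every} equivariant $D$-module is automatically constructible along them; now a countable family of IC-type generators $\calF^{\lambda,\mu}$ (IC sheaves of translated Schubert varieties) does suffice, and the Ext-matching is carried out by an explicit equivariant-cohomology computation (Propositions~\ref{c-sl}--\ref{ginz}) in the style of~\cite{befi}. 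The support condition $\calZ_\VV$ on the coherent side, and the characterization of compact objects, then enter separately. Your de-equivariantization by $\Rep(\GL(2))$ is orthogonal to this and does not purchase the finiteness that makes a generator/Ext-algebra argument viable.
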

It is explained in \cite{cime} how to ``deduce" Conjecture \ref{main-gln} from quantum field theory considerations. In this paper we are not going to discuss this physical motivation at all: instead we are going to present some mathematical evidence for it (mostly in the case $n=2$).

\subsection{The main conjecture: $\GL(2)$-case}
In this subsection we would like to strengthen Conjecture \ref{main-gln} in the case of $\GL(2)$. First, let us ask a natural question for arbitrary $n$. Namely, it is clear that the category $\IndCoh(\calW_n^{\triv})$ lives over $\prod_{i=1}^{n-1} \LS_i(\calD^*)$. How to see this structure on $\Dmod(\Gr_{\GL(n)})$?

We don't know the answer to this question except for the case $n=2$. To explain the answer we need to recall the statement of geometric local class field theory (due to G.~Laumon, cf.~\cite{laum}):

\begin{thm}\label{cft}
  There is a natural equivalence of monoidal categories
  $\Dmod(\calK^{\x})\simeq \QCoh(\LS_1(\calD^*))$.\footnote{In this case the equivalence actually holds on the level of abelian categories, but the equivalence of Conjecture \ref{main-gln} only has a chance to hold on the derived level. Also in this case there is no difference between QCoh and IndCoh.}
\end{thm}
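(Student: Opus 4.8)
The plan is to prove \thmref{cft} by a Fourier--Mukai argument, exploiting that for the one-dimensional torus the loop group splits as a product and that the spectral side splits accordingly. Every $f\in\calK^\times$ is uniquely of the form $f=a\,z^{N}u$ with $N\in\ZZ$, $a\in\GG_m$ and $u\in\calK^{++}:=1+z\CC\dbkts{z}$, and since multiplication of Laurent series is commutative this is an isomorphism of (ind-pro-algebraic) groups $\calK^\times\cong\ZZ\x\GG_m\x\calK^{++}$. Dually, a rank one local system on $\calD^*$ is a connection $d+\omega$ with $\omega\in\Omega^1_\calK$ taken modulo gauge transformations $\omega\mapsto\omega+d\log g$, $g\in\calK^\times$; since $d\log\colon\calK^\times\to\Omega^1_\calK$ is a homomorphism with kernel the constants and image $\ZZ\,\tfrac{dz}{z}\oplus\CC\dbkts{z}\,dz$, one obtains $\LS_1(\calD^*)\cong B\GG_m\x[\AA^1/\ZZ]\x\bigl(\bigoplus_{j\le-2}\CC z^{j}\,dz\bigr)$, the three factors recording the $\GG_m$ of automorphisms, the residue of $\omega$ modulo $\ZZ$ (the monodromy), and the polar part of $\omega$ of order $\ge 2$. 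Since $\Dmod$ and $\QCoh$ both carry products to tensor products (K\"unneth, with the usual care for the pro-/ind-pieces), it suffices to match the three factors pairwise: $\ZZ$ with $B\GG_m$, the leading-coefficient $\GG_m$ with $[\AA^1/\ZZ]$, and $\calK^{++}$ with the ind-affine space of higher polar parts.

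Each of the three matchings is a known transform. For $\ZZ$ it is tautological: $\Dmod(\ZZ)$ is the category of $\ZZ$-graded vector spaces, which is $\QCoh(B\GG_m)=\Rep(\GG_m)$. For $\GG_m$ it is the Mellin transform: with $s=t\partial_t$ the ring of global differential operators on $\GG_m=\Spec\CC[t^{\pm1}]$ is the crossed product $\CC[s]\rtimes\ZZ$ in which conjugation by $t$ sends $s$ to $s+1$, whence $\Dmod(\GG_m)\simeq\QCoh([\AA^1/\ZZ])$ for the free translation action, with $s$ the exponent and $e^{2\pi i s}$ the monodromy. For $\calK^{++}$ it is the algebraic Fourier transform: $\log$ identifies $\calK^{++}$ with the pro-vector group $(z\CC\dbkts{z},+)=\varprojlim_n z\CC[z]/z^{n}$, and passing to the limit in the standard equivalence $\Dmod(V)\simeq\QCoh(V^{\vee})$ for finite-dimensional vector groups $V$, while using the residue pairing $\langle g,\omega\rangle=\mathrm{res}(g\omega)$ to identify the restricted dual of $z\CC\dbkts{z}$ with $\bigoplus_{j\le-2}\CC z^{j}\,dz$, gives $\Dmod(\calK^{++})\simeq\QCoh\bigl(\bigoplus_{j\le-2}\CC z^{j}\,dz\bigr)$. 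Tensoring the three equivalences produces $\Dmod(\calK^\times)\simeq\QCoh(\LS_1(\calD^*))$.

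It remains to see that this equivalence intertwines convolution on the left with pointwise tensor product on the right, i.e.\ that it is an incarnation of Cartier duality. The cleanest route is to realize the functor as a Fourier--Mukai transform whose kernel is the universal rank one local system $\CP$ on $\calK^\times\x\LS_1(\calD^*)$ --- the analogue of the Poincar\'e bundle, built from the Contou-Carr\`ere symbol paired against the connection form --- which is multiplicative in the $\calK^\times$-variable; multiplicativity of the kernel then formally forces convolution to pass to tensor, and one checks factor by factor that $\CP$ induces precisely the equivalence of the previous paragraph, each of the three transforms being manifestly convolution-to-tensor. I expect the real difficulty to be the infinite-dimensional bookkeeping around $\calK^{++}$: setting up $\Dmod$ of this group scheme of infinite type and $\QCoh$ of the ind-scheme $\bigoplus_{j\le-2}\CC z^{j}\,dz$ so that the finite-dimensional Fourier equivalences glue compatibly along the (co)limit and no higher-derived pathology appears --- the point at which the difference between $\QCoh$ and $\IndCoh$ could in principle enter, although, as the footnote to \thmref{cft} records, here the equivalence already holds on abelian categories. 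Keeping track of the $\GG_m$-gerbe and the precise meaning of the $[\AA^1/\ZZ]$-factor is a secondary point requiring care.
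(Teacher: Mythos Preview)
The paper does not prove \thmref{cft}: it is quoted as geometric local class field theory, attributed to Laumon with a reference to~\cite{laum}, and used thereafter as a black box. So there is no argument in the paper against which to compare your proposal.

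Your overall architecture---split $\calK^\times\cong\ZZ\times\GG_m\times\calK^{++}$ and $\LS_1(\calD^*)\cong B\GG_m\times[\AA^1/\ZZ]\times\bigl(\bigoplus_{j\le-2}\CC z^j\,dz\bigr)$, match the three pieces, and reassemble via a Fourier--Mukai kernel built from the Contou-Carr\`ere pairing---is the standard one, and the first two matchings ($\Dmod(\ZZ)\simeq\Rep(\GG_m)$ and the Mellin transform $\Dmod(\GG_m)\simeq\QCoh([\AA^1/\ZZ])$) are correct as you state them.

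The third matching, however, rests on a false premise. You invoke ``the standard equivalence $\Dmod(V)\simeq\QCoh(V^{\vee})$ for finite-dimensional vector groups $V$'' and propose to pass to the pro-limit. No such finite-dimensional equivalence exists: already for $V=\AA^1$, $\Dmod(V)$ is the category of modules over the Weyl algebra, which is not $\QCoh(\AA^1)$. The algebraic Fourier transform is an equivalence $\Dmod(V)\simeq\Dmod(V^*)$, a symmetry internal to the D-module world; it does not produce quasi-coherent sheaves. The identification of $\Dmod(\calK^{++})$ with $\QCoh$ of the ind-affine space of irregular parts is a genuinely infinite-dimensional phenomenon, in which the pro-structure on the automorphic side and the ind-structure on the spectral side interact in a way that has no finite-dimensional shadow; it cannot be obtained as a limit of the nonexistent equivalences you cite. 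You are right to flag this step as the crux, but the mechanism you propose for it does not work, and supplying the correct one is essentially the entire content of the theorem.
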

Theorem \ref{cft} implies that the structure of ``living over $\LS_1(\calD^*)$" on a
category $\calC$ is the same as a strong action of $\calK^{\x}$ on $\calC$ (see e.g.~\cite[4.1.2]{Ga}).
Thus to answer our question for $n=2$ it is enough to describe a strong action of $\calK^{\x}$ on the category $\Dmod(\Gr_{\GL(n)})$.
Since the group $\GL(2,\calK)$ acts strongly on $\Dmod(\Gr_{\GL(2)})$, it is enough to describe a map $\calK^{\x}\to \GL(2,\calK)$. The relevant map is given by
$$
x\overset{\eta}\mapsto
\begin{pmatrix}
x & 0\\
0 & 1
\end{pmatrix}
$$

So, we get the following conjecture:
\begin{conj}\label{main-gl2}
The category $\IndCoh(\calW_2^{\triv})$ is equivalent to the category $\Dmod(\Gr_{\GL(2)})$. This equivalence respects the natural action of the tensor category $\Rep(\GL(2))$ on both sides. In addition the action of the tensor category $\QCoh(\LS_1(\calD^*))\simeq \Dmod(\calK^\x)$ on $\Dmod(\Gr_{\GL(2)})$  coming from the natural projection $\calW_2^{\triv}/\GL(2)\to \LS_1(\calD^*)$ under the above equivalence corresponds to the action of $\Dmod(\calK^\x)$ coming
from the embedding $\eta\colon \calK^\x\to \GL(2,\calK)$ defined above.
\end{conj}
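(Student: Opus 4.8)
As the full \conjref{main-gln} is not available, I describe how --- granting the weak form of it established in this paper, together with its compatibility with the $\Rep(\GL(2))$-actions --- one would obtain the corresponding statement about the two $\calK^\x$-actions in \conjref{main-gl2}. By \thmref{cft} and \cite[4.1.2]{Ga}, a strong $\calK^\x$-action on a stable category is the same datum as a module structure over $(\QCoh(\LS_1(\calD^*)),\otimes)$, equivalently a sheaf of categories over $\LS_1(\calD^*)$. On the coherent side this structure is manifest: it is $\calF\mapsto\calF\otimes p^*(-)$ for the projection $p\colon\calW_2^{\triv}\to\LS_1(\calD^*)$ that records $\calE_1$. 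On the $D$-module side it is convolution through $\eta\colon\calK^\x\to\GL(2,\calK)$ followed by the left action of $\GL(2,\calK)$ on $\Gr_{\GL(2)}$. So the task is to identify these two sheaves of categories over $\LS_1(\calD^*)$ compatibly with $\Phi$ and with the $\Rep(\GL(2))$-structure (which commutes with both).

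I would check this identification direction by direction along the decomposition $\calK^\x=z^{\ZZ}\times\GG_m\times(1+z\calO)$. The factors $\GG_m$ and $1+z\calO$ lie, under $\eta$, inside $\GL(2,\calO)$, so on the $D$-module side they give the left $\GL(2,\calO)$-equivariance of $\Dmod(\Gr_{\GL(2)})$; I would match this against the analogous structure on $\IndCoh(\calW_2^{\triv})$ at the infinitesimal level, comparing the weak action of $\fgl(1)\subset\fgl(2)$ on the left with the action by derivations of $\calE_1$ on the right, and then integrate --- using that $1+z\calO$ is pro-unipotent and that $\GG_m$ acts through a grading, so that no higher obstructions arise. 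The essential direction is the loop generator $z$: on the left this is the autoequivalence $L_{\eta(z)}$ of $\Dmod(\Gr_{\GL(2)})$ given by left translation by $\eta(z)=\operatorname{diag}(z,1)\in\GL(2,\calK)$; on the right it is the autoequivalence of $\IndCoh(\calW_2^{\triv})$ given by acting with the invertible object of $\QCoh(\LS_1(\calD^*))$ corresponding to $z$ under \thmref{cft}. Since $z$, $\GG_m$ and $1+z\calO$ generate $\Dmod(\calK^\x)$ monoidally, and since --- by factorization in the loop parameter $z$ --- the higher coherences are determined by their restriction to the formal disc, where the $\Rep(\GL(2))$-structure is in charge, matching these two autoequivalences together with the input from \conjref{main-gln} would finish the argument.

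To match the loop generator I would use the minuscule coweight $\varpi=(1,0)$ of $\GL(2)$: the orbit $\Gr^\varpi_{\GL(2)}\cong\BP^1$ is closed, $L_{\eta(z)}$ carries the base point of $\Gr_{\GL(2)}$ onto it and intertwines the $\GL(2,\calO)$-action with its conjugate by $\eta(z)$, and its effect on the natural generating families (IC-sheaves of $\GL(2,\calO)$-orbits, or the skyscrapers at the $T$-fixed points, which it shifts by $(1,0)$) is explicit. On the $\calW_2^{\triv}$ side the datum $\kap_1\colon\calE_1\to\calE_2=\calO_{\calD^*}^{\oplus2}$, together with the trivialization of $\calE_2$, is precisely what records the embedding of the ``first coordinate line'', and the total space of $\calO_{\BP^1}(-1)$ --- which on the $\Gr$ side carries the standard representation of $\GL(2)$ via geometric Satake --- is the universal geometry through which that datum is seen. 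I would therefore compare the two autoequivalences by their interaction with the already-matched $\Rep(\GL(2))$-action and with the tautological fiber sequence relating $\calE_1$, $\calE_2$ and the cokernel of $\kap_1$, showing that $\Phi$ carries the left-translation kernel on $\Gr_{\GL(2)}\x\Gr_{\GL(2)}$ to the kernel on $\calW_2^{\triv}\x_{\LS_1(\calD^*)}\calW_2^{\triv}$ implementing the $\calE_1$-twist, and then assemble everything with the monoidal coherences.

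The hard part is this last step, and inside it the derived (dg-stack) nature of $\calW_2^{\triv}$ flagged in the text. Because $\kap_1$ can be far from a bundle embedding at the level of classical local systems, $\calE_1$ is \emph{not} literally a sub-local-system of $\calE_2$, so the line $\calE_1$ and its twists are tied to the $\Rep(\GL(2))$-structure only after the mapping stack $\Maps_{\mathrm{loc.sys.}}(\calE_1,\calE_2)$ has been derived; one must then keep track of the higher $\Ext$-contributions and match them against the clean $\IC$-extension of the minuscule orbit on the $\Gr_{\GL(2)}$ side. Granting the weak form of \conjref{main-gln}, the infinitesimal matching for the $\GG_m$ and $1+z\calO$ directions and the final coherence bookkeeping should be comparatively routine.
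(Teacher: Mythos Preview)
The statement you are attempting to prove is labelled \conjref{main-gl2} because it \emph{is} a conjecture: the paper does not prove it and says so explicitly (``We don't know how to prove Conjecture~\ref{main-gl2} either''). What the paper actually establishes is the strictly weaker fiberwise statement, Theorem~\ref{thm}: for each rank-one local system $\calE$ on $\calD^*$, the fiber of $\IndCoh(\calW_2^{\triv})$ over $\calE$ is equivalent to the corresponding $(\calK^\x,\calL)$-equivariant category of $D$-modules on $\Gr_{\GL(2)}$. There is thus no proof in the paper to compare your proposal against.

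Your proposal also does not stand on its own. You take as input ``the weak form of it established in this paper, together with its compatibility with the $\Rep(\GL(2))$-actions'', but the paper establishes no global equivalence $\IndCoh(\calW_2^{\triv})\simeq\Dmod(\Gr_{\GL(2)})$ --- only the fiberwise one --- so the functor $\Phi$ whose $\calK^\x$-equivariance you set out to verify is not available. Knowing the fibers of a sheaf of categories over $\LS_1(\calD^*)$ does not determine the sheaf, so your decomposition $\calK^\x=z^{\ZZ}\times\GG_m\times(1+z\calO)$ and the matching of the loop generator via the minuscule orbit, however natural, have nothing to act on. You acknowledge as much (``The hard part is this last step''); the honest summary is that your text is a strategy sketch conditional on the very equivalence that constitutes the first half of the conjecture, not a proof.
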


\subsection{Fiberwise version}
We don't know how to prove Conjecture \ref{main-gl2} either. The purpose of this paper is to prove a weaker statement: namely we are going to show that the fibers of both $\IndCoh(\calW_2^{\triv})$ and of $\Dmod(\Gr_{\GL(2)})$ over any $\calE\in \LS_1(\calD^*)$ are equivalent.
Let us look at these fibers in more detail.

Denote by $\pi$ the natural projection $\calW_2^{\triv}\to \LS_1(\calD^*)$. Let $\calE\in \LS_1(\calD^*)$.  Let us first work with QCoh instead of IndCoh. Then the fiber of
$\QCoh(\calW_2^{\triv})$ over $\calE$ (which we shall denote by $\QCoh(\calW_2^{\triv})_{\calE}$)
is equivalent to $\QCoh(\pi^{-1}(\calE))$.\footnote{Here $\pi^{-1}(\calE)$ should be understood in dg-sense.}
Assume that $\calE$ is non-trivial. Then any morphism from $\calE$ to the trivial local system of rank 2 is 0; in other words away from the trivial local system (of rank 1) the natural map $\calW_2^{\triv}\to \LS_1(\calD^*)\times \pt/\GL(2)$ is an isomorphism. Hence $\pi^{-1}(\calE)=\pt/\GL(2)$ and in this case $\QCoh(\calW_2^{\triv})_{\calE}$ is equivalent to $\Rep(\GL(2))$.

On the other hand, assume that $\calE$ is trivial. Then $\pi^{-1}(\calE)$ is a dg-stack equivalent to $(\VV\x \VV[-1])/\GL(\VV)$ where $\VV$ is a two-dimensional vector space over $\CC$ (this follows from
the fact the the dg-scheme classifying $f\in \bfO_{\calD^*}$ such that $df=0$ is $\AA^1\times \AA^1[-1]$).

Let us go back to the IndCoh story. Assume that we have a morphism $\pi\colon \calY\to \calX$ of (dg) stacks; assume moreover that $\calX$ is a smooth classical stack.  In this case the fiber of $\IndCoh(\calY)$ over a point $x\in \calX$ is described in Section 2 of \cite{ArGa}. We are not going to reproduce that general answer here as it will require introducing more cumbersome notation; let us just explain what this answer amounts to in the case when $\calY=\calW_2^{\triv}$ and $\calX=\LS_1(\calD^*)$.

Let $\calE$ be a rank 1 local system on $\calD^*$ as above. First, if $\calE$ is non-trivial, then it is easy to see that the fiber $\IndCoh(\calW_2^{\triv})_{\calE}$ of $\IndCoh(\calW_2^{\triv})$ over $\calE$ is just $\Rep(\GL(2))$ as before. Let now $\calE$ be trivial.
Then, as was noted above we have the isomorphism
$$
\pi^{-1}(\calE)\simeq (\VV\times \VV[-1])/\GL(\VV),
$$
where $\VV$ is a two-dimensional vector space. By Koszul duality, the category $\IndCoh((\VV\times \VV[-1])/\GL(\VV))$ is equivalent to the derived category of $\GL(\VV)$-equivariant dg-modules over $\bfO_{\VV\times \VV^*[2]}$. On the other hand, the sought-for fiber $\IndCoh(\calW_2^{\triv})_{\calE}$ is equivalent to the derived category of $\GL(\VV)$-equivariant dg-modules over $\bfO_{\VV\times \VV^*[2]}$ which are set-theoretically supported on $\calZ_{\VV}\subset \VV\times \VV^*[2]$ consisting of pairs $(v,v^*)$ with $v^*(v)=0$. We shall denote this category by $\IndCoh_{\calZ_{\VV}}((\VV\times \VV[-1])/\GL(\VV))$.

Now, any $\calE$ as above defines a character $D$-module $\calL$ on $\calK^\x$, i.e.\ a rank 1 local system endowed with an isomoprhism
$m^*\calL\simeq \calL\boxtimes \calL$ (here $m\colon \calK^\x\times \calK^\x\to\calK^\x$ is the multiplication map) satisfying the standard associativity constraint. Under this correspondence trivial $\calE$ corresponds to trivial $\calL$, i.e.\ $\calL$ isomorphic to $\bfO_{\calK^\x}$ (note that $\calL$ is trivial if and only if it is trivial when restricted to $\calO^{\x}$). Given any $\calL$ as above, and a category $\calC$ with a strong action of $\calK^{\x}$ it makes sense to consider the category of $(\calK^{\x},\calL)$-equivariant objects in $\calC$. When $\calL$ is trivial, this is just the category of $\calK^{\x}$-equivariant objects.

Thus the following result is exactly the ``fiberwise version" of Conjecture \ref{main-gl2}:

\begin{thm}\label{thm}
Let $\calK^{\x}$ act on $\Gr_{\GL(2)}$ by means of the map $\eta$. Then
\begin{enumerate}
\item
Let $\calL$ be a non-trivial character $D$-module on $\calK^{\x}$. Then the category of $(\calK^{\x},\calL)$-equivariant $D$-modules on
$\Gr_{\GL(2)}$ is equivalent to $\Rep(\GL(2))$.
\item
Let $D^b_{\calK^{\x}}(\Gr_{\GL(2)})$ denote the full subcategory of the derived category of $\calK^{\x}$-equivariant $D$-modules on
$\Gr_{\GL(2)}$ whose restriction to any connected component of $\Gr_{\GL(2)}$ is a bounded complex whose cohomology $D$-modules have finite-dimensional support and are coherent. Then $D^b_{\calK^{\x}}(\Gr_{\GL(2)})$ is equivalent to $\Coh( (\VV\x \VV[-1])/\GL(\VV))$ (here $\VV$ is again a two-dimensional vector space over $\CC$).
\item
Let $D_{\calK^{\x}}(\Gr_{\GL(2)})$ denote the derived category of $\calK^{\x}$-equivariant $D$-modules on $\Gr_{\GL(2)}$. Then an object of $D_{\calK^{\x}}(\Gr_{\GL(2)})$ is compact if and only if

(a) It lies in $D^b_{\calK^{\x}}(\Gr_{\GL(2)})$;

(b) Its image under the equivalence (2) lies in $\Coh_{\calZ_{\VV}}((\VV\times \VV[-1])/\GL(\VV))$.

\noindent
In particular, the equivalence (ii) extends to the equivalence between $D_{\calK^{\x}}(\Gr_{\GL(2)})$ and $\IndCoh_{\calZ_{\VV}}((\VV\times \VV[-1])/\GL(\VV))$.
\end{enumerate}
\end{thm}

The rest of the paper is devoted to the proof of Theorem \ref{thm}.

\medskip
\noindent
{\bf Remarks.}
The fact that usually not all objects of the bounded equivariant derived category of $D$-modules (or constructible sheaves) are compact was first observed and studied by V.~Drinfeld and D.~Gaitsgory,
cf.~\cite{DrGa}. Also the reader should compare the last two assertions of Theorem \ref{thm} with respectively Theorem 12.3.3 and Corollary 12.5.5 of \cite{ArGa}.

\subsection{Acknowledgements}
This paper resulted from numerous conversations of the first-named author with D.~Gaitsgory and S.~Raskin which took place
during the workshop ``Vertex algebras, factorization algebras and applications" at IPMU in July 2018. The authors thank both D.~Gaitsgory and S.~Raskin
for their patient explanations and the organizers of the workshop for hospitality and for providing this opprtunity. We would also like to thank Roman Bezrukavnikov for help with some technical details of the proof.
M.F.\ was partially funded within the framework of the HSE University Basic Research Program and
the Russian Academic Excellence Project `5-100'.
\section{Proof of Theorem \ref{thm}(1)}
\subsection{Sketch of the proof}
In what follows we denote by $\Lam=\ZZ\oplus\ZZ$ the coweight lattice of $\GL(2)$ and by
$$
\Lam^+=\{ (a,b)\in \Lam|\ a\geq b\}
$$
the cone of dominant coweights.
Fix now a non-trivial character $D$-module $\calL$ on $\calK^{\x}$.
We claim that in order to prove Theorem \ref{thm}(1) it is enough to construct an embedding $\iota_{\calL}$ from $\Lam^+$ into the set of
$\calK^{\x}$-orbits on $\Gr_{\GL(2)}$ such that the following 3 properties hold:

(i) A $\calK^{\x}$-orbit on $\Gr_{\GL(2)}$ supports a $(\calK^{\x},\calL)$-equivariant $D$-module if and only if it lies in the image of $\iota_{\calL}$.

\noindent
In what follows for every $\lam\in \Lam^+$ let us denote by $\calF^{\lam}_!$ and $\calF^{\lam}_*$ the $!$ and $*$-extensions to all of $\Gr_{\GL(2)}$ of the corresponding irreducible $(\calK^{\x},\calL)$-equivariant $D$-module on the orbit $\iota_{\calL}(\lam)$.

(ii) For any $\lam\in \Lam^+$ we have
$$
\calF^0_!\star \IC^{\lam}\simeq \calF^{\lam}_!;\quad \calF^0_*\star \IC^{\lam}\simeq \calF^{\lam}_*.
$$

(iii) The natural morphism $\calF^0_!\to\calF^0_*$ is an isomorphism.

\medskip
Indeed, (ii) and (iii) together imply that the map $\calF^{\lam}_!\to \calF^{\lam}_*$ is an isomorphism for any $\lam$. Hence the category of $(\calK^{\x},\calL)$-equivariant $D$-modules on $\Gr_{\GL(2)}$ is semi-simple with simple objects $\calF^{\lam}:=\calF^{\lam}_!\simeq \calF^{\lam}_*$. Now (ii) implies that the functor $\calS\mapsto \calF^0\star \calS$ from $\Dmod_{\GL(2,\calO)}(\Gr_{\GL(2)})$ to the (abelian) category of $(\calK^{\x},\calL)$-equivariant $D$-modules on $\Gr_{\GL(2)}$ is an equivalence which is exactly what we had to prove.

So, it remains to define the map $\iota_{\calL}$ and to check the properties (i)--(iii).

\subsection{The map $\iota_{\calL}$}
There exists unique $k>0$ such that $\calL$ is pulled back from $\calO^{\x}/1+z^k\calO$ but not pulled back from $\calO^{\x}/1+z^{k-1}\calO$.
The corresponding map $\iota_{\calL}$ will only depend on $k$ which will be fixed till the end of this Section.
To simplify the notation we shall simply write $Y_{\lam}$ for the $\calK^{\x}$-orbit of $z^{\iota_{\calL}(\lambda)}$. Also we set
$X_{\lambda}$ to be the intersection of $Y_{\lambda}$ with $\Gr_{\SL(2)}$.

Let $\lam=(n_1,n_2)$ with $n_1\geq n_2$.
Then we set $Y_{\lam}$ to be the $\calK^{\x}$-orbit of the (image in $\Gr_{\GL(2)}$ of the) matrix

$$
\begin{pmatrix}
1 & z^{-k-n}\\
0 & 1
\end{pmatrix} \cdot
\begin{pmatrix}
z^{-n_2} & 0\\
0 & z^{n_2}
\end{pmatrix}
$$
Here $n=n_1+n_2$.

\subsection{Proof of (i)}\label{i}
It is enough to deal with $\calO^{\x}$-orbits on $\Gr_{\SL(2)}$ instead of $\calK^{\x}$-orbits on $\Gr_{\GL(2)}$.
Such orbits are in one-to-one correspondence with pairs $(m,l)\in \ZZ\x\ZZ$ with $l-2m\leq 0$; the $\calO^{\x}$-orbit corresponding to a given $(m,l)$
is the orbit of the matrix
$$
\begin{pmatrix}
1 & z^l\\
0 & 1
\end{pmatrix} \cdot
\begin{pmatrix}
z^m & 0\\
0 & z^{-m}
\end{pmatrix}
$$
The stabilzer of the above point in $\calO^{\x}$  is $1+z^{2m-l}\calO$. Hence this orbit supports a $(\calO^{\x},\calL)$-equivariant $D$-module if and only if $2m-l\geq k$. This is exactly the condition that there exists a pair $(n_1,n_2)\in \ZZ\times \ZZ$ such that $n_1\geq n_2$ satisfying
the equations
$$
l=-k-n,\quad m=-n_2.
$$

\subsection{Proof of (ii)}
Let us compute the convolution of $\calF^0_!$ with $\IC^{\lambda}$ where $\lambda=(n_1,n_2)$ (the corresponding calculation for $\calF^0_*$ is completely analogous). We need to show the following two things:

(1) The $*$-restriction $\calF^0_!\star\IC^{\lambda}$ to $X_{\lambda}$ is equal to IC-sheaf of $X_{\lambda}$;

(2) The $*$-restriction $\calF^0_!\star\IC^{\lambda}$ to any $\calO^*$-orbit on $\Gr_{\SL(2)}$ different from  $X_\lambda$ is equal to $0$.

\medskip
\noindent
For this it is enough to compute the stalk of $\calF^0_!\star\IC^{\lambda}$ at any point of the form
$$
g=
\begin{pmatrix}
1 & z^l\\
0 & 1
\end{pmatrix} \cdot
\begin{pmatrix}
z^m & 0\\
0 & z^{-m}.
\end{pmatrix}
$$
Let us fix $\lambda=(n_1,n_2), m,l$ and $k$ and let
$$
Z=\{ x\in X_0|\ x^{-1}g\in \oGr_{\GL(2)}^{\lambda}\}.
$$
Let $i$ denote the natural map from $Z$ to $X_0\simeq \calO^*/1+z^k\calO$. Then the above stalk is equal to $H^*_c(Z, i^*\calL[\dim X_0+\dim \Gr_{\GL(2)}^{\lambda}])$. We can assume that $x$ is of the form
$$
x=
\begin{pmatrix}
z^{-n} & az^{-n-k}\\
0 & 1
\end{pmatrix}
$$
where $a\in \calO^{\x}$. Then
$$
x^{-1}g=
\begin{pmatrix}
z^{n+m} & z^{n+l-m}-az^{-k-m}\\
0 & z^{-m}
\end{pmatrix}.
$$
This matrix defines a point in $\oGr_{\GL(2)}^{\lambda}$ if $n+m,-m\geq n_2$ and
$z^{-m}(z^{n+l}-az^{-k})\in z^{n_2}\calO$. Let $a=\sum a_i z_i$. We see that if $-m>n_2$ then
changing $a_{k-1}$ does not affect the above conditions; so, ``integrating out" $a_{k-1}$ first
we see that $H^*_c(Z, i^*\calL[\dim X_0+\dim \Gr_{\GL(2)}^{\lambda}])=0$. Assume now that $-m=n_2$. Then unless
$n+l=-k$ the above equations have no solutions, hence the sought-for stalk is again 0. The case $-m=n_2, n+l=-k$
is precisely the case $g\in X_{\lambda}$. In this case we must have $a_0=1$ and $ a_j=0$ for $0<j<k$.
So $Z$ consists of just one point and
$H^*_c(Z,\CC[ \dim X_0+\dim \Gr_{\GL(2)}^{\lambda}]=\CC[\dim X_{\lambda}]$ (since it is easy to see that $\dim X_0+\dim \Gr_{\GL(2)}^{\lambda}=\dim X_{\lambda}$).
\subsection{Proof of (iii)}
It follows from the discussion in the beginning of~Subsection~\ref{i} that

\medskip
(a) If an $\calO^{\x}$-orbit $X$ on $\Gr_{\SL(2)}$ carries a non-zero $(\calO^{\x},\calL)$-equivariant sheaf then $\dim X\geq k$;

(b) $\dim X_0=k$.

\medskip
\noindent
It follows from (b) that $\overline{X}_0\backslash X$ is a union of $\calO^{\x}$-orbits of dimension $<k$. Thus (a) implies that the natural morphism $\calF^0_!\to\calF^0_*$ is an isomorphism.

\section{Proof of Theorem \ref{thm}(2)}
\label{section 3}

In this Section we prove the 2nd assertion of Theorem \ref{thm}. It is in fact a mild variation on the proof of the derived geometric Satake equivalence (cf.~\cite{befi}).
\subsection{Reduction to $\SL(2)$}
We are supposed to study the derived category of $\calK^{\x}$-equivaraint $D$-modules on $\Gr_{\GL(2)}$. We claim that it is the same as the derived category of $\calO^{\x}$-equivariant $D$-modules on $\Gr_{\SL(2)}$ (here $\calO^{\x}$ is embedded into $\SL(2,\calK)$ via the identification of the standard Cartan subgroup of $\SL(2)$ with $\GG_m$). Indeed, we have $\calK^{\x}=\calO^{\x}\x \ZZ$. The last factor acts simply  transitively on the set of connected components of $\Gr_{\GL(2)}$ and the first factor preserves every connected component. Hence a $\calK^{\x}$-equivariant $D$-module on $\Gr_{\GL(2)}$ is the same as an $\calO^{\x}$-equivariant $D$-module on the connected component of 1, which is equal to $\Gr_{\SL(2)}$. The reader must be warned that the action of $\calO^{\x}$ on $\Gr_{\SL(2)}$ coming from our usual $\calK^{\x}$-action on
$\Gr_{\GL(2)}$ is not the same as the action coming from the Cartan torus of $\SL(2)$, but the latter is obtained from the former my means of the map $x\mapsto x^2$ which doesn't change the equivariant derived category.

For the remainder of this section we shall write $\Gr$ instead of $\Gr_{\SL(2)}$.
\subsection{Koszul duality}
We let $D_{\calO^{\x}}(\Gr)$ denote the corresponding equivariant derived category; since orbits of $\calO^{\x}$ on $\Gr$ are parametrized by discrete set, we can work with constructible sheaves instead of $D$-modules.

We let $D^b_{\calO^\x}(\Gr)$ denote the bounded derived category of
$\calO^\x$-equivariant constructible sheaves on $\Gr$. Recall that we need to show the following:
\begin{thm}
$D^b_{\calO^\x}(\Gr)\simeq \on{Coh}((\VV\x \VV^*[2])/\GL(\VV))$.
\end{thm}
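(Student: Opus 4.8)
The plan is to follow the familiar "derived geometric Satake" strategy (as in the cited work of Bezrukavnikov--Finkelberg), realizing $D^b_{\calO^\x}(\Gr)$ as the derived category of modules over a graded algebra obtained by taking $\Ext$'s in the equivariant category, and identifying that algebra with $\bfO_{\VV\x\VV^*[2]}$ together with its $\GL(\VV)$-action. First I would recall the stratification of $\Gr=\Gr_{\SL(2)}$ by $\calO^\x$-orbits: by the discussion in \secref{i} these are indexed by pairs $(m,l)$ with $l-2m\le 0$, the orbit $Y_{m,l}$ being of dimension $2m-l$, and the closure order is the evident one. The key point is that this poset is "nice" enough that the category $D^b_{\calO^\x}(\Gr)$ is governed by the Yoneda algebra of the direct sum of the IC-sheaves (or, more conveniently, the $*$- or $!$-extensions) of the constant sheaves on the orbits; one wants to show this Yoneda algebra is formal, with explicitly computable cohomology.

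The second step is the computation of that $\Ext$-algebra. Here one uses that each orbit closure $\overline{Y}_{m,l}$, together with the $\calO^\x$-action, is a (pro-)affine-space bundle situation — the orbits look like $\AA^{2m-l}$ acted on by $\calO^\x/(1+z^{2m-l}\calO)\cong\GG_m\ltimes(\text{unipotent})$ — so equivariant cohomology of each stratum is computable, and the transition maps in the spectral sequence attached to the stratification degenerate. After passing to $\calO^\x$-equivariance one picks up a polynomial generator $u$ in cohomological degree $2$ from $H^*_{\GG_m}$; together with the classes coming from the "Satake" part (the loop-rotation-free directions) one gets generators spanning, as a $\GL(\VV)$-representation, a copy of $\VV$ in degree $0$ (from the fundamental/minuscule directions of $\Gr_{\SL(2)}$) and a copy of $\VV^*$ in degree $2$ (from the equivariant direction). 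This matches exactly $\bfO_{\VV\x\VV^*[2]}$: the ring of functions on $\VV\x\VV^*$ with the second factor placed in cohomological degree $2$, with its manifest $\GL(\VV)=\GL(2)$-action, since $\Lam^+$ of $\GL(2)$ sits inside this picture via the $\iota$ of Section 2. One then invokes a formality statement (weight/purity argument, or an explicit $\GG_m$-action with contracting properties on $\Gr$, cf.\ the mixed-Hodge-module or Koszul-duality formalism used in \cite{befi}) to conclude that $D^b_{\calO^\x}(\Gr)$ is equivalent to the derived category of finitely generated dg-modules over this algebra, i.e.\ to $\Coh((\VV\x\VV^*[2])/\GL(\VV))$.

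The third step is bookkeeping to match the two sides precisely: one must check that "finitely generated"/"coherent" on the algebraic side corresponds on the topological side to the boundedness-and-coherence condition defining $D^b_{\calO^\x}(\Gr)$ (restriction to each component bounded, finite-dimensional support, coherent cohomology), and that the monoidal $\Rep(\GL(2))$-action via convolution with $\IC^\lambda$ goes over to the $\GL(\VV)$-equivariant structure. The computation in \secref{ii} (that $\calF^0_!\star\IC^\lambda\simeq\calF^\lambda_!$) is the shadow of this on the level of orbits and tells us how the dominant coweights $\lambda=(n_1,n_2)$ are realized inside $\bfO_{\VV\x\VV^*[2]}$; extending it to all convolutions and all $\Ext$-groups is routine once formality is in hand. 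Note also that $\VV\x\VV^*[2]$ is literally the Koszul dual of $\VV\x\VV[-1]$, so at the end one rewrites $\Coh((\VV\x\VV^*[2])/\GL(\VV))$ as $\Coh((\VV\x\VV[-1])/\GL(\VV))$ as in the statement of \thmref{thm}(2).

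The main obstacle I expect is the formality/purity argument: one needs a clean reason that the $\Ext$-algebra of the relevant generating objects in $D^b_{\calO^\x}(\Gr)$ is formal as a dg-algebra, together with the correct grading. In the non-equivariant derived Satake setting this is handled by mixed Hodge/Weil-sheaf weights or by a $\GG_m$-action making everything "pure"; here the presence of the pro-unipotent part of $\calO^\x$ and the need to keep track of the extra degree-$2$ loop-rotation class means one has to be a little careful that the relevant Ext's are concentrated in the expected degrees and that no higher $A_\infty$-operations survive. Once this is secured, the rest of the proof is a concatenation of the orbit combinatorics of Section 2, standard equivariant-cohomology computations, and Koszul duality as in \cite{ArGa} and \cite{befi}.
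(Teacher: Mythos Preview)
Your overall strategy---compute an $\Ext$-algebra of generating objects, establish formality via purity, and match with the coherent side---is the right template, but the paper executes it differently, and your version has a real gap in how the $\GL(\VV)$-structure is produced.

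The paper does \emph{not} take as generators the IC-sheaves of $\calO^\x$-orbits. Instead it uses the sheaves $\calF^{\lam,\mu}=\IC(z^\mu\oGr^\lam)$, i.e.\ the translates of \emph{spherical} Schubert IC-sheaves, indexed by pairs $(\lam,\mu)$ with $\lam\in\ZZ_{\ge0}$, $\mu\in\ZZ$, $\lam-\mu\in2\ZZ$; these correspond exactly to irreducible $\GL(2)$-representations $V(\lam,\mu)$, and the functor is declared by $F(\calF^{\lam,\mu})=\bfO_{\VV\times\VV^*[2]}\otimes V(\lam,\mu)$. The $\GL(\VV)$-equivariance on the coherent side is thus \emph{built into the choice of generators}; it does not appear, as you suggest, as a decomposition of the Ext-algebra into copies of $\VV$ and $\VV^*$. (In fact your degree assignment is off: the endomorphism algebra $H^*_{\calO^\times}(\Gr,\CC)=\CC[\bfa,\bfc]$ has both generators in degree~$2$, and the $\GL(\VV)$-action is invisible there.) The references you make to \secref{i} and the computation of (ii) are for the case of \emph{non-trivial} $\calL$, which is a different regime from the one relevant here.

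The Ext computation in the paper is also more concrete than a general formality statement. The key step is a Ginzburg-type result (their Proposition on $\Ext=\Hom$ of equivariant cohomology): $\Ext_{D^b_{\calO^\times}(\Gr)}(\calF^{\lam,\mu},\calF^{\lam',\mu'})$ equals $\Hom_{\CC[\bfa,\bfc]}(H^*_{\calO^\times}(\Gr,\calF^{\lam,\mu}),H^*_{\calO^\times}(\Gr,\calF^{\lam',\mu'}))$. Injectivity is the purity/localization argument you sketch; surjectivity, however, is checked by hand using that each $\oGr^{\lam,\mu}$ is a $\BP^\lam$, that $\oGr^{\lam,\mu}\cap\oGr^{\lam',\mu'}$ is again of this form, and that $H^*_{\calO^\times}(\Gr,\calF^{\lam,\mu})$ is cyclic over $\CC[\bfa,\bfc]$ with explicit annihilator $\prod_{i=0}^\lam(\bfc-\bfa(2i+\mu-\lam))$. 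The matching with the coherent side is then obtained by restricting $\GL(\VV)$-equivariant sheaves on $\VV\times\VV^*[2]$ first to $P$-equivariant sheaves on $\VV^*[2]$ (using transitivity on $\VV\setminus\{0\}$), and then to a specific affine line in $\VV^*$, producing graded $\CC[\bfa,\bfc]$-modules. This bypasses any direct $A_\infty$/formality argument: once $\Ext$ is identified with $\Hom$ of honest graded modules over a polynomial ring, the construction of the functor follows \cite{befi} verbatim.

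So the missing idea in your plan is the correct bookkeeping device for the $\GL(\VV)$-action: it comes from choosing translated spherical sheaves as generators (one per $V(\lam,\mu)$), not from orbit IC-sheaves, and the comparison is mediated by an explicit two-variable polynomial ring $\CC[\bfa,\bfc]$ rather than by identifying a single Ext-algebra with $\bfO_{\VV\times\VV^*[2]}$.
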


\subsection{Equivariant cohomology}
Let $\lam\in \ZZ_+,\mu\in \ZZ$. We are going to think about $\lam$ as a dominant coweight of $\PGL(2)$ and about $\mu$ as an arbitrary coweight of $\PGL(2)$. Let us also assume that $\lam-\mu\in 2\ZZ$. Then we define $\calF^{\lam,\mu}$ to be the IC-sheaf of $z^{\mu}\oGr^{\lam}$ (note that because $\lam$ and $\mu$ have the same parity, it follows that $z^{\mu}\oGr^{\lam}\subset \Gr_{\SL(2)}=\Gr$). This is an object of $D^b_{\calO^\x}(\Gr)$.  We would like to describe
$H^*_{\calO^\x}(\Gr,\calF^{\lam,\mu})$ as a module over $H^*_{\calO^{\x}}(\Gr,\CC)$.

First, let us describe $H^*_{\calO^{\x}}(\Gr,\CC)$. Namely, let $\bf{Det}$ denote the standard determinant line bundle on $\Gr$. Then we have
$$
H^*_{\calO^{\x}}(\Gr,\CC)=\CC[\bfa,\bfc]
$$
where $\bfa$ is the standard generator of $H^*_{\calO^\x}(\pt)=H^*_{\CC^\x}(\pt)$ and $\bfc=c_1(\bf{Det})$ (equivariant first Chern class).

We can now describe $H^*_{\calO^\x}(\Gr,\calF^{\lam,\mu})$.

\begin{prop}\label{c-sl}
Let $V(\lam)$ denote the irreducible representation of $\SL(2)$ with highest weight $\lam$ (it has dimension $\lam+1$). Let $\pi_{\lam}\colon \mathfrak{sl}_2\to \End(V(\lam))$ denote the corresponding map. Then
the $H^*_{\calO^{\x}}(\Gr,\CC)=\CC[\bfa,\bfc]$-module $H^*_{\calO^\x}(\Gr,\calF^{\lam,\mu})$ is isomorphic to
$\CC[\bfa]\otimes V(\lam)$ where

a) $\bfc$ acts by
\begin{equation}\label{princ}
\pi_{\lam}
\begin{pmatrix}
0 & 1\\
\bfa^2 & 0
\end{pmatrix}
+ \mu \bfa.
\end{equation}

b) The grading on $\CC[\bfa]\otimes V(\lam)$ is equal to the tensor product of the standard grading on $\CC[\bfa]$ (recall that $\bfa$ has degree 2) and the grading on $V(\lam)$ by eigenvalues of $h$ (here we use the standard basis $(e,h,f)$ of the Lie algebra of $\SL(2)$). Note that the endomorphism of $\CC[\bfa]\otimes V(\lam)$ given by the  element \ref{princ} is homogeneous of degree 2 with respect to this grading.
\end{prop}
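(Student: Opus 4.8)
The proof follows the pattern of the derived Satake computation of \cite{befi}: one computes the equivariant cohomology by localization to the $T$-fixed points of $\Gr$, establishes freeness over $\CC[\bfa]$ by a parity argument, and uses hard Lefschetz to pin down the $\bfc$-action at the special value $\bfa=0$. I would begin by reducing to a torus. Writing $\calO^\x=\CC^\x\times(1+z\calO)$ with $1+z\calO$ pro-unipotent and noting that $\calF^{\lam,\mu}$ is $(1+z\calO)$-equivariant (since $z^\mu$ commutes with the diagonal torus), one gets $H^*_{\calO^\x}(\Gr,\calF^{\lam,\mu})\cong H^*_T(\Gr,\calF^{\lam,\mu})$ canonically, where $T=\CC^\x$ is the constant-loop Cartan subgroup, $\bfa$ the generator of $H^*_T(\pt)=\CC[\bfa]$, and $\bfc=c_1(\mathbf{Det})\in H^2_T(\Gr)$. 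Recall that $\calF^{\lam,\mu}$ is the $\IC$-sheaf of the projective variety $z^\mu\oGr^\lam$, whose $T$-fixed points are the $z^{\mu+\nu}$ with $\nu$ ranging over the weights $\{\lam,\lam-2,\dots,-\lam\}$ of $V(\lam)$, each carrying a one-dimensional stalk (Mirkovi\'c--Vilonen), and that $\mathbf{Det}$ restricts to an ample line bundle on $z^\mu\oGr^\lam$.

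Next I would establish freeness and compute the characteristic polynomial of $\bfc$. The sheaf $\calF^{\lam,\mu}$ is $*$- and $!$-even for the stratification of $\Gr$ by semi-infinite orbits, so $\Gr$ is $T$-equivariantly formal for it: $H^*_T(\Gr,\calF^{\lam,\mu})$ is a free graded $\CC[\bfa]$-module and $H^*_T(\Gr,\calF^{\lam,\mu})\otimes_{\CC[\bfa]}\CC\cong H^*(\Gr,\calF^{\lam,\mu})\cong V(\lam)$, the last isomorphism of graded vector spaces (carrying the cohomological grading to the $h$-eigenvalue grading) being geometric Satake for $\SL(2)$; this already yields the underlying graded $\CC[\bfa]$-module and the grading of assertion (b), and shows $\bfc$ acts $\CC[\bfa]$-linearly and homogeneously of degree $2$. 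By equivariant localization, after inverting $\bfa$ the module $H^*_T(\Gr,\calF^{\lam,\mu})\otimes_{\CC[\bfa]}\CC(\bfa)$ is a sum of one-dimensional spaces indexed by the fixed points $z^{\mu+\nu}$, on which $\bfc$ acts by $c_1(\mathbf{Det})|_{z^{\mu+\nu}}$; a direct computation of the $T$-weight of the fibre of $\mathbf{Det}$ at $z^{\mu+\nu}$ gives $c_1(\mathbf{Det})|_{z^{\mu+\nu}}=(\mu+\nu)\bfa$ (this is where the precise normalization of $\mathbf{Det}$ enters). Hence the characteristic polynomial of $\bfc$ is $\prod_{\nu}\bigl(t-(\mu+\nu)\bfa\bigr)$, which is exactly the characteristic polynomial of the operator $\pi_\lam\!\begin{pmatrix}0&1\\\bfa^2&0\end{pmatrix}+\mu\bfa$ on $\CC[\bfa]\otimes V(\lam)$; note that the appearance of $\bfa^2$ in (a) is then forced, since a companion matrix $\begin{pmatrix}0&1\\c&0\end{pmatrix}$ with eigenvalues $\pm\bfa$ must have $c=\bfa^2$, and the degree-$2$ homogeneity does the rest.

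To upgrade the matching of characteristic polynomials to an isomorphism of $\CC[\bfa][\bfc]$-modules I would invoke hard Lefschetz. Since $\mathbf{Det}$ is ample on $z^\mu\oGr^\lam$, the operator $\bfc|_{\bfa=0}$ on $H^*(\Gr,\calF^{\lam,\mu})=V(\lam)$ satisfies hard Lefschetz, and as its graded pieces are one-dimensional this forces it to be a single Jordan block, i.e.\ a regular nilpotent; being a degree-$2$ raising operator compatible with the Satake $\SL(2)$-action it equals $\pi_\lam\!\begin{pmatrix}0&1\\0&0\end{pmatrix}$. Consequently $H^*_T(\Gr,\calF^{\lam,\mu})$ is generated modulo $\bfa$ by the image of a lowest weight vector, so by graded Nakayama it is a cyclic $\CC[\bfa][\bfc]$-module; being also free of rank $\lam+1$ over $\CC[\bfa]$, it is isomorphic, as a graded $\CC[\bfa][\bfc]$-module, to $\CC[\bfa][\bfc]/(P)$ where $P$ is the characteristic polynomial found above. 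The operator $\pi_\lam\!\begin{pmatrix}0&1\\\bfa^2&0\end{pmatrix}+\mu\bfa$ on $\CC[\bfa]\otimes V(\lam)$ is itself cyclic (a regular nilpotent at $\bfa=0$) with the same characteristic polynomial $P$, hence defines the same graded $\CC[\bfa][\bfc]$-module; matching cyclic generators (both in degree $-\lam$) makes the identification graded, which is assertions (a) and (b).

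The crux is the pair of structural inputs in the last two paragraphs: the $T$-equivariant formality of $\calF^{\lam,\mu}$ (freeness over $\CC[\bfa]$), resting on the parity of $\IC$-sheaves on the affine Grassmannian, and the hard Lefschetz assertion identifying $\bfc|_{\bfa=0}$ with a regular nilpotent compatibly with the Satake $\SL(2)$-action — it is this combination that converts the purely numerical characteristic-polynomial data coming from localization into the precise module structure of the proposition. The remaining ingredients — the reduction from $\calO^\x$ to the constant-loop torus, the computation of the $T$-weights of $\mathbf{Det}$ at the fixed points (and hence the sign/normalization conventions), and the verification that the stated operator is homogeneous of degree $2$ — are routine.
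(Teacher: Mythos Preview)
Your argument is correct, but it is considerably more elaborate than what the paper actually does. The paper's proof is two sentences: the case $\mu=0$ is declared well-known (this is exactly the derived Satake computation for $\SL(2)$ from \cite{befi}, which is what you are essentially reproving via formality, localization, and hard Lefschetz), and the passage to arbitrary $\mu$ is handled by the single observation that translation by $z^{\mu}$ shifts the equivariant first Chern class by $\mu\bfa$, i.e.\ $c_1\bigl((z^{\mu})^*\mathbf{Det}\bigr)=\bfc+\mu\bfa$, which one checks at the $\calO^{\x}$-fixed points. In other words, the paper separates the problem into the known $\mu=0$ structure and a trivial twist, while you run the whole localization/cyclicity argument with the $\mu$-shift baked in from the start. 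What your route buys is a self-contained proof that does not rely on citing the $\mu=0$ case as a black box; what the paper's route buys is brevity and a cleaner conceptual picture (the $\mu$-dependence is manifestly just a scalar shift of $\bfc$, which is exactly how $\mu\bfa$ appears in the formula). One small remark: your identification of $\bfc|_{\bfa=0}$ with $\pi_\lam(e)$ ``compatibly with the Satake $\SL(2)$-action'' is slightly more than you need---it suffices that $\bfc|_{\bfa=0}$ is a regular nilpotent of degree $2$, since you are free to choose the graded isomorphism $H^*(\calF^{\lam,\mu})\cong V(\lam)$ so as to send it to the standard $e$.
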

\begin{proof}
This statement is well-known when $\mu=0$. To prove it for general $\mu$ it is enough to show that
$c_1((z^{\mu})^*\mathbf{Det})=\bfc+\mu \bfa$. It is enough to check this equality after restricting to every $\calO^{\x}$-fixed point on $\Gr$ where it is obvious.
\end{proof}
Let us slightly reformulate this answer. Given $\lam$ and $\mu$ as above let $V(\lam,\mu)$ denote the (unique) irreducible representation of $\GL(2)$, such that its restriction to $\SL(2)$ is isomorphic to $V(\lam)$ and its central character is given by $\mu$ (note that such a representation exists precisely when $\lam-\mu\in 2\ZZ$). In what follows we shall regard it as a graded vector space, where the grading as before is given by the eigenvalues of $h\in \mathfrak{sl}_2$. Let $\pi_{\lam,\mu}\colon \mathfrak{gl}_2\to \End(V(\lam,\mu))$ denote the corresponding map.
Then~(\ref{princ}) is equal to
\begin{equation}\label{princ-gl}
\pi_{\lam,\mu}
\begin{pmatrix}
\bfa & 1\\
\bfa^2 & \bfa
\end{pmatrix}.
\end{equation}
Let us make yet another reformulation of the answer.
Let
$$
S(\bfa)=\begin{pmatrix}
\bfa & 1\\
\bfa^2 & \bfa
\end{pmatrix},\quad
T(\bfa)=
\begin{pmatrix}
0 & 1\\
0 & 2\bfa
\end{pmatrix}
$$
Then $T(\bfa)=g(\bfa)^{-1} S(\bfa) g(\bfa)$
where
$$
g(\bfa)=
\begin{pmatrix}
1 &  0 \\
-\bfa & 1
\end{pmatrix}.
$$

Hence we get the following equivalent version of Proposition \ref{c-sl}:
\begin{prop}\label{c-gl}
The $H^*_{\calO^{\x}}(\Gr,\CC)=\CC[\bfa,\bfc]$-module $H^*_{\calO^\x}(\Gr,\calF^{\lam,\mu})$ is isomorphic to
$\CC[\bfa]\otimes V(\lam,\mu)$ where $\bfc$ acts by $\pi_{\lam,\mu}(T(\bfa))$.
\end{prop}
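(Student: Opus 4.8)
The plan is to deduce Proposition~\ref{c-gl} from Proposition~\ref{c-sl} by an explicit change of basis, so the only real content lies in Proposition~\ref{c-sl} itself. First I would treat the case $\mu=0$, which is the statement that $H^*_{\calO^\x}(\Gr,\IC^\lam)$, as a module over $H^*_{\calO^\x}(\Gr,\CC)=\CC[\bfa,\bfc]$, is $\CC[\bfa]\otimes V(\lam)$ with $\bfc$ acting through the ``principal'' Kostant-type matrix $\pi_\lam\!\left(\begin{smallmatrix}0&1\\ \bfa^2&0\end{smallmatrix}\right)$; this is exactly the shape in which the equivariant (hyperbolic-localization / derived Satake) computation is packaged in \cite{befi}, and I would cite it there, noting that the $\SL(2)$ (equivalently $\PGL(2)$) case is classical and amounts to the fact that the equivariant cohomology of the minimal affine Grassmannian slices is a free $\CC[\bfa]$-module of the correct rank on which the Chern class $\bfc$ acts by a regular nilpotent deformed by $\bfa^2$. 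The grading claim in part b) is just the statement that $\bfc=c_1(\mathbf{Det})$ is homogeneous of degree $2$ and that the $h$-grading on $V(\lam)$ is the weight grading coming from the loop rotation / Cartan action, which is how the isomorphism is set up in the first place.

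Next I would upgrade from $\mu=0$ to general $\mu$. The orbit $z^\mu\oGr^\lam$ is the translate of $\oGr^\lam$ by the point $z^\mu\in\Gr$, and translation by $z^\mu$ is an isomorphism of ind-schemes commuting with the $\calO^\x$-action up to the automorphism that is trivial on $\calO^\x$ (since $\calO^\x$ is in the Cartan, it commutes with $z^\mu$). Hence $\calF^{\lam,\mu}$ is the pushforward of $\IC^\lam$ under translation by $z^\mu$, and $H^*_{\calO^\x}(\Gr,\calF^{\lam,\mu})\cong \CC[\bfa]\otimes V(\lam)$ as a $\CC[\bfa]$-module with the same grading, but the action of $\bfc$ is twisted because $\mathbf{Det}$ is not translation-invariant: precisely $(z^\mu)^*\mathbf{Det}\cong \mathbf{Det}\otimes(\text{a line of }\calO^\x\text{-weight }\mu)$, which is the content of the little lemma in the proof of Proposition~\ref{c-sl}. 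Checking $c_1((z^\mu)^*\mathbf{Det})=\bfc+\mu\bfa$ is done by restricting to the $\calO^\x$-fixed points $z^\nu G(\calO)$ on $\Gr$, where both sides become explicit linear functions of $\bfa$ and the determinant line bundle has a well-known weight; this is the one genuinely computational step, but it is elementary. Substituting $\bfc\mapsto \bfc+\mu\bfa$ into the $\mu=0$ answer and then specializing $\bfc$ to the operator $\pi_\lam\!\left(\begin{smallmatrix}0&1\\ \bfa^2&0\end{smallmatrix}\right)$ gives formula~(\ref{princ}).

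For the passage to Proposition~\ref{c-gl} I would simply record the two displayed identities: that~(\ref{princ}) rewritten via the $\GL(2)$-representation $V(\lam,\mu)$ (whose $\SL(2)$-restriction is $V(\lam)$ and whose central character is $\mu$) equals $\pi_{\lam,\mu}(S(\bfa))$ with $S(\bfa)=\left(\begin{smallmatrix}\bfa&1\\ \bfa^2&\bfa\end{smallmatrix}\right)$ — here one uses that the central $\bfa\cdot\Id\in\fgl_2$ acts by $\mu\bfa$ in this normalization of the central character — and that $S(\bfa)$ is conjugate to $T(\bfa)=\left(\begin{smallmatrix}0&1\\ 0&2\bfa\end{smallmatrix}\right)$ by the unipotent matrix $g(\bfa)=\left(\begin{smallmatrix}1&0\\ -\bfa&1\end{smallmatrix}\right)$. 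Applying $\pi_{\lam,\mu}(g(\bfa))$ as a ($\CC[\bfa]$-linear, grading-shifting) automorphism of $\CC[\bfa]\otimes V(\lam,\mu)$ intertwines the two actions of $\bfc$, which yields Proposition~\ref{c-gl}. I expect the main obstacle to be purely bookkeeping: making sure the gradings match after conjugation by $g(\bfa)$ — note $g(\bfa)$ is \emph{not} homogeneous, so one must check (as the paper already flags) that $\pi_{\lam,\mu}(T(\bfa))$ is still homogeneous of degree $2$, which holds because conjugation by $g(\bfa)$ is an inner automorphism taking the degree-$2$ element $S(\bfa)$ to another degree-$2$ element — and keeping consistent normalizations for $\bfa$ versus the Cartan generator $h$ (the factor of $2$ in $T(\bfa)$ and in the map $x\mapsto x^2$ of the previous subsection). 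None of this is deep; the real theorem is the $\mu=0$ input from derived Satake, which I am invoking rather than reproving.
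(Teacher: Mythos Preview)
Your proposal is correct and follows essentially the same route as the paper: deduce Proposition~\ref{c-sl} for $\mu=0$ from the known derived Satake computation, pass to general $\mu$ via the identity $c_1((z^{\mu})^*\mathbf{Det})=\bfc+\mu\bfa$ checked at $\calO^\x$-fixed points, rewrite~(\ref{princ}) as $\pi_{\lam,\mu}(S(\bfa))$ using the central character, and then conjugate by $g(\bfa)$ to get $T(\bfa)$. One small correction to your bookkeeping worry: $\pi_{\lam,\mu}(g(\bfa))=\Id-\bfa\,\pi_{\lam,\mu}(f)$ is actually homogeneous of degree $0$ on $\CC[\bfa]\otimes V(\lam,\mu)$ (since $\bfa$ has degree $2$ and $f$ has degree $-2$), so the grading issue you flag does not arise.
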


\subsection{The functor}
We can now describe the functor $F\colon D^b_{\calO^\x}(\Gr)\to\on{Coh}((\VV\x\VV^*[2])/\GL(2))$.
Namely, it has the property that
$$
F(\calF^{\lam,\mu})=\calO_{\VV\x\VV^*[2]}\otimes V(\lam,\mu)
$$
where the group $\GL(2)$ acts on the RHS diagonally. We claim that in order to check existence of $F$ it is enough
to construct isomorphisms
\begin{equation}\label{isom}
\begin{aligned}
&\Ext_{D^b_{\calO^{\x}}(\Gr)}(\calF^{\lam,\mu},\calF^{\lam',\mu'})\simeq \\ & \Ext_{\bfO_{\VV\x\VV^*[2]}\rtimes \GL(2)}(\bfO_{\VV\x\VV^*[2]}\otimes V(\lam,\mu),\bfO_{\VV\x\VV^*[2]}\otimes V(\lam',\mu'))
\end{aligned}
\end{equation}
for any $(\lam,\mu)$ and $(\lam',\mu')$ as above (these isomorphisms must be compatible with compositions). Indeed,
if we have such isomorphisms then a word-by-word repetition of the arguments of~\cite[Section~6]{befi} constructs the functor $F$ (and also proves that it is an equivalence).

\subsection{Computing Ext's}
The next result allows us to compute Ext's between $\calO^{\x}$-equivariant IC-sheaves on $\Gr$; it is analogous to a
theorem of V.~Ginzburg from \cite{ginz} but we do not know how to prove it by any general argument.
\begin{prop}\label{ginz}
\begin{equation}\label{ginzburg}
\Ext_{D^b_{\calO^{\x}}(\Gr)}(\calF^{\lam,\mu},\calF^{\lam',\mu'})=\Hom_{H^*_{\calO^{\x}}(\Gr,\CC)}(H^*_{\calO^{\x}}(\Gr,\calF^{\lam,\mu}), H^*_{\calO^{\x}}(\Gr,\calF^{\lam',\mu'})).
\end{equation}
Here we use the following convention: when we write Hom between two graded modules over a graded ring we consider {\em all} homomorphisms (not just those that preserve the grading).
\end{prop}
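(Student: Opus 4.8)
\textbf{Proof proposal for Proposition \ref{ginz}.}

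The plan is to reduce the computation of the left-hand side to a spectral-sequence argument that collapses, using the explicit description of the equivariant cohomology modules from Proposition \ref{c-gl}. First I would recall the standard identity relating Ext groups in an equivariant derived category to maps of equivariant cohomology modules: there is always a canonical map
\[
\Ext_{D^b_{\calO^{\x}}(\Gr)}(\calF^{\lam,\mu},\calF^{\lam',\mu'})\longrightarrow \Hom_{H^*_{\calO^{\x}}(\Gr,\CC)}\bigl(H^*_{\calO^{\x}}(\Gr,\calF^{\lam,\mu}),H^*_{\calO^{\x}}(\Gr,\calF^{\lam',\mu'})\bigr),
\]
obtained by applying the functor $H^*_{\calO^{\x}}(\Gr,-)$; more precisely, one has a spectral sequence with $E_2$-term $\on{Ext}^{\bullet}_{\CC[\bfa,\bfc]}\bigl(H^*_{\calO^{\x}}(\Gr,\calF^{\lam,\mu}),H^*_{\calO^{\x}}(\Gr,\calF^{\lam',\mu'})\bigr)$ converging to the left-hand side (this is the formality/degeneration package used in \cite{befi,ginz}; here the coefficient ring $H^*_{\calO^{\x}}(\Gr,\CC)=\CC[\bfa,\bfc]$ is a polynomial ring, hence of global dimension $2$, but the relevant modules turn out to be well-behaved). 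The claim is then that (a) the higher $\Ext^{i}_{\CC[\bfa,\bfc]}$ terms for $i>0$ vanish, and (b) the spectral sequence degenerates, so that the edge map above is an isomorphism.

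For step (a), the key observation is the structure of the modules: by Proposition \ref{c-gl}, $H^*_{\calO^{\x}}(\Gr,\calF^{\lam,\mu})\cong \CC[\bfa]\otimes V(\lam,\mu)$ with $\bfc$ acting by $\pi_{\lam,\mu}(T(\bfa))$. As a $\CC[\bfa]$-module this is \emph{free} of finite rank $\lam+1$. Thus each of these modules, viewed over $\CC[\bfa,\bfc]=\CC[\bfa][\bfc]$, is the pushforward of a coherent sheaf on $\Spec\CC[\bfa]=\AA^1$ along the closed embedding defined by $\bfc\mapsto \pi_{\lam,\mu}(T(\bfa))$ — in other words it is a torsion $\CC[\bfc]$-module finite flat over $\CC[\bfa]$, i.e.\ a Cohen--Macaulay $\CC[\bfa,\bfc]$-module of codimension one, with a length-one free resolution $0\to \CC[\bfa,\bfc]^{\lam+1}\xrightarrow{\bfc-\pi_{\lam,\mu}(T(\bfa))}\CC[\bfa,\bfc]^{\lam+1}\to H^*_{\calO^{\x}}(\Gr,\calF^{\lam,\mu})\to 0$. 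Using this two-term resolution of the first argument to compute $\Ext^{\bullet}_{\CC[\bfa,\bfc]}(-,H^*_{\calO^{\x}}(\Gr,\calF^{\lam',\mu'}))$, the complex becomes $\Hom_{\CC[\bfa]}(\CC[\bfa]^{\lam+1},-)\xrightarrow{\ \text{(transpose)}\ }\Hom_{\CC[\bfa]}(\CC[\bfa]^{\lam+1},-)$ applied to the finite free $\CC[\bfa]$-module $\CC[\bfa]\otimes V(\lam',\mu')$; the map on each factor is multiplication by $\bfc - \pi_{\lam',\mu'}(T(\bfa))$, i.e.\ by $\pi_{\lam,\mu}(T(\bfa))^{\mathrm t}\otimes 1 - 1\otimes \pi_{\lam',\mu'}(T(\bfa))$ acting on $\Hom_{\CC[\bfa]}(\CC[\bfa]\otimes V(\lam,\mu),\CC[\bfa]\otimes V(\lam',\mu'))$. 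I would show this endomorphism of a free $\CC[\bfa]$-module is \emph{injective}, which gives $\Ext^{1}=0$ and identifies $\Ext^{0}$ with its cokernel; and separately that the cokernel is exactly $\Hom_{\CC[\bfa,\bfc]}(-,-)$ computed with all (not necessarily grading-preserving) homomorphisms. Injectivity is the matrix statement that $T(\bfa)$ has no two eigenvalues whose difference is a nonzero element of $\CC[\bfa]$ killing a nonzero vector over the fraction field — concretely, $T(\bfa)$ is conjugate (Proposition \ref{c-gl}'s setup, via $g(\bfa)$) to $S(\bfa)=\begin{pmatrix}\bfa & 1\\ \bfa^2 & \bfa\end{pmatrix}$, whose eigenvalues are $\bfa\pm\bfa=2\bfa,0$ over $\CC[\bfa]$, and the "principal" part $\pi_\lam\begin{pmatrix}0&1\\\bfa^2&0\end{pmatrix}$ has eigenvalues $j\bfa$, $j=-\lam,-\lam+2,\dots,\lam$, so pairwise eigenvalue differences on the two sides are $(j-j')\bfa+(\mu-\mu')\bfa=(\text{integer})\bfa$, nonzero as soon as the pair $(j+\mu,j'+\mu')$ differ — and when they agree the corresponding block contributes precisely a grading-preserving Hom, matching the RHS. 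Working this out blockwise over the generalized eigenspaces of $\bfa^{-1}T(\bfa)$ is the routine but slightly fiddly computation.

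For step (b), degeneration of the spectral sequence: since by (a) the $E_2$-page is concentrated in the single row $\Ext^0_{\CC[\bfa,\bfc]}$, there is nothing more to check — the spectral sequence degenerates trivially and the edge map is an isomorphism, which is exactly \eqref{ginzburg}. Compatibility with compositions is automatic from functoriality of $H^*_{\calO^{\x}}(\Gr,-)$ together with the fact that under the identification the composition on the right is the obvious one. \textbf{The main obstacle} I anticipate is not any single step in isolation but justifying the existence and convergence of the spectral sequence (equivalently, a suitable formality statement for the dg-algebra $R\!\Hom_{D^b_{\calO^{\x}}(\Gr)}(\bigoplus\calF^{\lam,\mu},\bigoplus\calF^{\lam,\mu})$ over $\CC[\bfa,\bfc]$) in the $\calO^{\x}$-equivariant setting on the infinite-dimensional $\Gr$ — this is where \cite{befi} does real work and where, as the authors note, no purely formal argument is available; once that framework is in place, the collapse is forced by the explicit freeness over $\CC[\bfa]$ established above.
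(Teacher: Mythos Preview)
Your approach has a gap beyond the formality issue you already flag: step~(a) is false. Using the cyclic description implicit in Proposition~\ref{c-gl}, namely $M_{\lam,\mu}:=H^*_{\calO^{\x}}(\Gr,\calF^{\lam,\mu})\cong \CC[\bfa,\bfc]\big/\prod_{i\in S(\lam,\mu)}(\bfc-i\bfa)$, the two-term resolution gives $\Ext^0_{\CC[\bfa,\bfc]}(M_{\lam,\mu},M_{\lam',\mu'})\cong\Ext^1_{\CC[\bfa,\bfc]}(M_{\lam,\mu},M_{\lam',\mu'})\cong \CC[\bfa,\bfc]\big/\prod_{i\in S(\lam,\mu)\cap S(\lam',\mu')}(\bfc-i\bfa)$, which is nonzero whenever $k>0$. (Already for $\lam=\lam'=\mu=\mu'=0$ one has $M=M'=\CC[\bfa,\bfc]/(\bfc)$ and $\Ext^1=\CC[\bfa]\ne 0$, while the left side of~\eqref{ginzburg} is $H^*_{\calO^{\x}}(\pt)=\CC[\bfa]$; so even granting your spectral sequence, the extra $\Ext^1$ would have to be killed by a differential, not by degeneration.) Your injectivity argument also has $\Ext^0$ and $\Ext^1$ interchanged: applying $\Hom(-,M')$ to a projective resolution gives $\Ext^0=\ker$ and $\Ext^1=\on{coker}$, and the operator $\pi_{\lam,\mu}(T(\bfa))^{\mathrm t}\otimes 1-1\otimes\pi_{\lam',\mu'}(T(\bfa))$ has eigenvalue $(j-j')\bfa$ on the $(j,j')$ weight space, hence is neither injective nor surjective once $k>0$.

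The paper proceeds quite differently and never invokes formality. Injectivity of the tautological map from the left to the right side of~\eqref{ginzburg} comes from the standard purity-plus-localization package: both sides are free over $H^*_{\calO^{\x}}(\pt)=\CC[\bfa]$ (purity of Ext's between IC sheaves on a projective variety), and the map becomes an isomorphism after inverting $\bfa$ because $\CC^{\x}\subset\calO^{\x}$ has finitely many fixed points on each $\oGr^{\lam,\mu}$. Surjectivity is then a direct comparison exploiting the special $\SL(2)$ geometry: each $\oGr^{\lam,\mu}$ is a $\BP^{\lam}$, so $\calF^{\lam,\mu}$ is a shifted constant sheaf and the left side equals $H^*_{\calO^{\x}}(\oGr^{\lam,\mu}\cap\oGr^{\lam',\mu'},\CC)[\lam'-\lam]$. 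A short lemma identifies this intersection with $\oGr^{\lam'',\mu''}$ where $S(\lam'',\mu'')=S(\lam,\mu)\cap S(\lam',\mu')$, and one matches the resulting cyclic $\CC[\bfa,\bfc]$-module against the explicit description of the right side.
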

\begin{proof}
Obviously, we have a map from the LHS of (\ref{ginzburg}) to the RHS of (\ref{ginzburg}). First, we claim that his map is injective.
For this it is enough to show the following:

(1) Both sides are free modules over $H^*_{\calO^{\x}}(\pt)$;

(2) The map in question becomes an isomorphism after tensoring with the field of fractions of $H^*_{\calO^{\x}}(\pt)$.

\noindent
The first assertion is known to follow from the fact that the corresponding non-equivariant Ext's  and cohomologies are pure (which follows from the fact that these are Ext's between pure sheaves on a projective variety). The second assertion follows from localization theorem since the set of fixed points of $\CC^{\x}\subset \calO^{\x}$ in the closure of any $\calO^{\x}$-orbit on $\Gr$ is finite.

Now let us show that the above map is surjective.
It follows from Proposition~\ref{c-gl} that $H^*_{\calO^{\x}}(\Gr,\calF^{\lam,\mu})$ is a cyclic $H^*_{\calO^{\x}}(\Gr,\CC)=\CC[\bfa,\bfc]$-module
generated by one vector $v_{\lam,\mu}$ of degree $-\lam$ whose annihilator is generated by the element
\begin{equation}\label{cyclic}
\prod\limits_{i=0}^{\lam}(\bfc-\bfa(2i+\mu-\lam)).
\end{equation}

Let now $(\lam,\mu)$ and $(\lam',\mu')$ be as in (\ref{ginzburg}). Let $S(\lam,\mu)$ be the set $\{\mu-\lam,\mu-\lam+2,\cdots,\lam\}$ (respectively, let $S(\lam',\mu')=\{\mu'-\lam',\mu-\lam+2,\cdots,\lam'\}$). Let $k$ be the cardinality of $S(\lam,\mu)\cap S(\lam',\mu')$.
Then the RHS of (\ref{ginzburg}) is

a) equal to $0$ if $k=0$;

b) generated by one element of degree $2(\lam'+1-k)$ whose annihilator in $\CC[\bfa,\bfc]$ is generated by
$\prod\limits_{i\in S(\lam,\mu)\cap S(\lam',\mu')} (\bfc-\bfa i)$ for $k>0$.

\noindent
We now want to compare this to the LHS of (\ref{ginzburg}). Let $\oGr^{\lam,\mu}$ denote the support of $\calF^{\lam,\mu}$. Since $\calF^{\lam,\mu}$ (resp. $\calF^{\lam',\mu'}$) is the constant sheaf on $\oGr^{\lam,\mu}$ (resp. on $\oGr^{\lam',\mu'}$) shifted by
$\lam$ (resp. by $\lam'$), it follows that the LHS of (\ref{ginzburg}) is equal to $H^*_{\calO^*}(\oGr^{\lam,\mu}\cap \oGr^{\lam',\mu'},\CC)[\lam'-\lam]$.
Thus, Proposition \ref{ginz} follows from the following:
\begin{lem}
\begin{enumerate}
\item
$\oGr^{\lam,\mu}\cap \oGr^{\lam',\mu'}=\emptyset$ if $k=0$.
\item
$\oGr^{\lam,\mu}\cap \oGr^{\lam',\mu'}=\oGr^{\lam'',\mu''}$,
where $\lam'',\mu''$ are such that $S(\lam,\mu)\cap S(\lam',\mu')=S(\lam'',\mu'')$ (for $k>0$).
\end{enumerate}
\end{lem}
\begin{proof}
Thre assignment $\mu\mapsto z^{\mu}$ defines a bijection between $2\ZZ$ and $\Gr^{\CC^{\x}}$. Any closed $\calO^*$-invariant subset of $\Gr$ is uniquely determined by its intersection with $\Gr^{\CC^{\x}}=2\ZZ$. It is easy to see that $\oGr^{\lam,\mu}\cap\Gr^{\CC^{\x}}=S(\lam,\mu)$, hence the lemma follows.
\end{proof}
The proposition is proved. \end{proof}

\subsection{}
\label{311}
We need to construct an isomorphism between the RHS of~(\ref{isom}) and the RHS
of~(\ref{ginzburg}). Note that the latter is equal to Hom between two explicit modules over the ring $\CC[\bfa,\bfc]$ over the polynomial ring in two variables of degree 2. We would like to rewrite the former in a similar way. For this let us do the following.

First, let $P$ denote the stabilzer of the vector $(1,0)$ in $\VV$. Then we claim that
$$
\begin{aligned}
\Ext_{\bfO_{\VV\x\VV^*[2]}\rtimes \GL(2)}(\bfO_{\VV\x\VV^*[2]}\otimes V(\lam,\mu),\bfO_{\VV\x\VV^*[2]}\otimes V(\lam',\mu'))=\\
\Hom_{\bfO_{\VV^*[2]}\rtimes P}(\bfO_{\VV^*[2]}\otimes V(\lam,\mu),\bfO_{\VV^*[2]}\otimes V(\lam',\mu')).
\end{aligned}
$$
Indeed, since we are computing Hom's between free modules, we can replace $\VV$ by $\VV\backslash \{ 0\}$. Since $\GL(2)$ acts transitively on the latter with $P$ being the stabilizer of one element we obtain the above isomorphism.

Now we would like to describe a functor from the category of $P$-equivariant coherent sheaves on $\VV^*[2]$ to the category of graded modules over
$\CC[\bfa,\bfc]$ which is fully faithful on free modules. The category of $P$-equivariant coherent sheaves on $\VV^*[2]$ can be thought of as the category of $P$-equivariant graded modules over $\CC[x,y]$ where $x$ and $y$ both have degree 2. The group $P$ consists of matrices
\begin{equation}\label{P}
g=
\begin{pmatrix}
1  & \alpha\\
0 & \beta
\end{pmatrix}.
\end{equation}
Such a matrix acts on a vector $(x,y)$ to by means of $(g^t)^{-1}$ (here $g^t$ stands for the tranposed matrix).
Thus the Lie algebra of $P$ consists of matrices of the form
$$
A=
\begin{pmatrix}
0  & u\\
0 & v
\end{pmatrix}
$$
and $A(x,y)=(0,-ux-vy)$.

Let us take a module $M$ as above and let us restrict it to the line $y=-1$, i.e. consider the quotient $M/(y+1)M$.
This  quotient is endowed with a natural action of $\CC^{\x}$ which comes from the $\CC^{\x}$-action on $M$ coming from the grading on $M$ and the action coming from the embedding $\CC^{\x}\hookrightarrow P$ corresponding to matrices as in~(\ref{P}) with $\alpha=0$.
We would like to extend this to a structure of a graded $\CC[\bfa,\bfc]$-module on it.

The action of $\bfa$ just comes from the action of $x/2$ on $M$. The action of $\bfc$ is characterized by the property that its action on the fiber over the point $(x,-1)=(2\bfa,-1)$ is
given by the action of the matrix
\begin{equation}\label{cent}
\begin{pmatrix}
0 & 1\\
0 & 2\bfa
\end{pmatrix}\in \on{Lie}(P).
\end{equation}
This makes sense because this matrix kills the vector $(2\bfa,-1)$ and hence the corresponding one-parametric subgroup (and hence also its Lie algebra) acts on the fiber of any $P$-equivariant coherent sheaf over $(2\bfa,-1)$.

Let us denote the resulting functor from $P$-equivariant coherent sheaves on $\VV^*[2]$ to graded $\CC[\bfa,\bfc]$-modules by $\tilF$.
It follows from Proposition \ref{c-gl} that this functor sends the module $\bfO_{\VV^*[2]}\otimes V(\lam,\mu)$ to $H^*_{\calO^{\x}}(\Gr,\calF^{\lam,\mu})$.
To finish the proof it remains to show that $\tilF$ is fully-fathful on free modules. This immediately follows from the following two (easy) statements:

(1) $P\cdot \{(x,-1)\}=\VV^*\backslash \{ 0\}$;

(2) The stabilizer of the point $(2\bfa,-1)$ in $P$ is equal to the one-parametric subgroup generated by the matrix~(\ref{cent}).

\subsection{Abelian equivalence}
We would like to conclude this Section with a variant of Theorem \ref{thm}(2) which in particular will give rise to certain equivalence of abelian categories (this is not strictly speaking needed for the purposes of this paper, but it is important for some future work). Namely, first of all
we claim that the category $\Coh( (\VV\x \VV[-1])/\GL(\VV))$ is equivalent to the derived category of $\GL(\VV)$-equivariant finitely generated modules over
the algebra $\Lam(\VV)\otimes \Lam(\VV^*)$. Indeed, $\Coh( (\VV\x \VV[-1])/\GL(\VV))$ is the derived category of $\GL(\VV)$-equivariant dg-modules over $\Sym(\VV^*)\otimes \Lam(\VV^*[-1])$ (considered as a dg-algebra with trivial differential).\footnote{Here when we write $\Lam(W[d])$ (for a vector space $W$ and $d\in \ZZ$), we just mean the dg-algebra with trivial differential which is equal to the exterior algebra generated by elements of $W$ which have homological degree $-d$, i.e.\ we are NOT using the ``super-notation" here with respect to the homological degree. Same goes for the notation $\Sym(W[d])$.}

Let now $M$ be any $\GL(\VV)$-equivariant
dg-module over $\Sym(\VV^*)\otimes \Lam(\VV^*[-1])$.  Define a new grading of $M$ which is equal to the sum of the old grading and the grading coming from the action of the center of $\GL(\VV)$. This makes it into a $\GL(\VV)$-equivariant dg-module over $\Sym(\VV^*[1])\otimes \Lam(\VV^*)$. By applying Koszul duality with respect to the first factor we can now associate to $M$ a finitely generated $\GL(\VV)$-eqvuivariant module
 over $\Lam(\VV)\otimes \Lam(\VV^*)$. It is easy to see  that this procedure defines an equivalence between the derived category of $\GL(\VV)$-equivariant dg-modules over $\Sym(\VV^*)\otimes \Lam(\VV^*[-1])$ and the derived category of $\GL(\VV)$-equivariant modules over $\Lam(\VV)\otimes \Lam(\VV^*)$. The advantage of the latter model is that it comes equipped with an obvious $t$-structure, whose heart is the abelian category of $\GL(\VV)$-equivariant modules over the algebra $\Lam(\VV)\otimes \Lam(\VV^*)$.

 On the other hand, the category $D^b_{\calK^{\x}}(\Gr_{\GL(2)})$ also has an obvious $t$-structure whose heart can be identified with the category $\Perv_{\calK^{\x}}(\Gr_{\GL(2)})$ of $\calK^{\x}$-equivariant perverse sheaves on $\Gr_{\GL(2)}$ (the latter category is the same as
 $\Perv_{\calO^{\x}}(\Gr_{\SL(2)})$ which is just the full subcategory of the category of perverse sheaves (with finite-dimensional support) on
 $\Gr_{\SL(2)}$ which are constant along $\calO^{\x}$-orbits).

 The following statement is an easy corollary of the proof of Theorem \ref{thm}(2); we leave the details to the reader.
\begin{thm}\label{abelian}
The equivalence between $D^b_{\calK^{\x}}(\Gr_{\GL(2)})$ and the derived category of $\GL(\VV)$-equivariant finitely generated modules over $\Lam(\VV)\otimes \Lam(\VV^*)$ (obtained by combining Theorem \ref{thm}(2) and the equivalence described in the beginning of this subsection) preserves the above $t$-structures. In particular, the category $\Perv_{\calK^{\x}}(\Gr_{\GL(2)})$ is equivalent to the abelian category of $\GL(\VV)$-equivariant finitely generated modules over the algebra $\Lam(\VV)\otimes \Lam(\VV^*)$.
\end{thm}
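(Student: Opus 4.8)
The plan is to upgrade the equivalence of Theorem~\ref{thm}(2) to a $t$-exact one and then simply read off the statement about abelian categories. Write $G$ for the composite of the equivalence $D^b_{\calO^\x}(\Gr)\simeq\Coh((\VV\x\VV^*[2])/\GL(\VV))$ of Theorem~\ref{thm}(2) with the regrading-plus-Koszul-duality equivalence of this subsection, so that $G$ lands in the derived category of $\GL(\VV)$-equivariant finitely generated $\Lam(\VV)\otimes\Lam(\VV^*)$-modules. By the reduction at the beginning of Section~\ref{section 3} and the remark that $\Perv_{\calK^\x}(\Gr_{\GL(2)})=\Perv_{\calO^\x}(\Gr)$, it is enough to prove that $G$ is $t$-exact for the perverse $t$-structure on the source and the tautological $t$-structure on the target. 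I claim this reduces to a statement about finitely many explicit objects.

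First, a soft reduction: for an equivalence of bounded derived categories carrying $t$-structures it suffices to check that the heart of the source is sent into the heart of the target. Indeed, a short dévissage on the cohomological amplitude of an object (using the triangles $\tau^{<n}X\to X\to H^n(X)[-n]$ and the long exact sequences of target-cohomology) then yields $H^i\circ G\cong G\circ H^i$, which is $t$-exactness. Second, since every object of $\Perv_{\calO^\x}(\Gr)$ has finite-dimensional support, it is supported on finitely many $\calO^\x$-orbits, so $\Perv_{\calO^\x}(\Gr)$ is of finite length and is generated under extensions by its simple objects; as $G$ is triangulated and the target heart is extension-closed, it is enough to show that $G$ sends each simple $\calO^\x$-equivariant perverse sheaf into the target heart, i.e.\ to a module concentrated in homological degree $0$.

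It remains to identify those simple objects and compute their images. Since $\calO^\x$ is connected and the stabilizer of any point of $\Gr$ is a subgroup of the form $1+z^N\calO$, hence connected, the only $\calO^\x$-equivariant local system on an orbit is the constant one, so the simple objects of $\Perv_{\calO^\x}(\Gr)$ are the IC-sheaves of $\calO^\x$-orbit closures. Combining the description of these orbits in Subsection~\ref{i} with the fact (used in the proof of the Lemma at the end of Section~\ref{section 3}) that an $\calO^\x$-invariant closed subset of $\Gr$ is determined by its intersection with $\Gr^{\CC^\x}=2\ZZ$, one checks that every $\calO^\x$-orbit closure equals some $z^\mu\oGr^\lam$ and is rationally smooth; hence the simple objects are exactly the sheaves $\calF^{\lam,\mu}$. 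Now I would feed the formula $F(\calF^{\lam,\mu})=\calO_{\VV\x\VV^*[2]}\otimes V(\lam,\mu)$ into the Koszul-duality equivalences of this subsection: the structure sheaf $\calO_{\VV\x\VV^*[2]}$, i.e.\ the free module over $\bfO_{\VV\x\VV^*[2]}$, is taken by Koszul duality to the module set-theoretically supported at the origin, and after the regrading and the remaining Koszul duality along the $\Sym(\VV^*)$-direction one reads off that $G(\calF^{\lam,\mu})$ is a module concentrated in homological degree $0$ — in fact the simple module $V(\lam,\mu)$, on which $\Lam^{>0}(\VV)$ and $\Lam^{>0}(\VV^*)$ act by zero — and in particular an object of the target heart.

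The one step that genuinely requires care is this last computation: one must carry the homological grading through two Koszul dualities and the regrading — the shift conventions $[1]$ versus $[-1]$, the sign of the central $\GL(\VV)$-weight, and the homological shift by a top exterior power built into Koszul/BGG duality — and check that these conspire to place $G(\calF^{\lam,\mu})$ in homological degree $0$ with no spread (equivalently, that the shifts cancel exactly as they must for the already-established equivalence of Theorem~\ref{thm}(2) to be correctly normalized). Everything else is formal: granting this, $G$ is $t$-exact, hence restricts to an equivalence between $\Perv_{\calK^\x}(\Gr_{\GL(2)})$ and the abelian category of $\GL(\VV)$-equivariant finitely generated $\Lam(\VV)\otimes\Lam(\VV^*)$-modules, which is the assertion of the theorem.
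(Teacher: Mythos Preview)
The paper gives no proof of this theorem: it simply says ``The following statement is an easy corollary of the proof of Theorem~\ref{thm}(2); we leave the details to the reader.'' Your proposal is precisely the natural way to supply those details --- reduce $t$-exactness to checking that simple perverse sheaves land in the target heart, identify the simples as the $\calF^{\lam,\mu}$, and track $\calO_{\VV\times\VV^*[2]}\otimes V(\lam,\mu)$ through the regrading and Koszul dualities --- so there is nothing to compare against beyond saying your outline matches the intended argument.

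Two small remarks. First, your justification that the only equivariant local system on an orbit is constant should invoke that the stabilizers $1+z^N\calO$ are pro-\emph{unipotent} (hence have no nontrivial finite-dimensional representations), not merely connected; and at the torus fixed points the stabilizer is all of $\calO^\times$, not $1+z^N\calO$, though it is still connected so the conclusion stands. Second, the step ``every $\calO^\times$-orbit closure equals some $z^\mu\oGr^\lam$'' does not follow just from ``closed invariant subsets are determined by their fixed points'' (that would allow disconnected fixed-point sets); one really uses that orbit closures are \emph{irreducible}, so their fixed-point sets are intervals in $2\ZZ$, which are exactly the $S(\lam,\mu)$. These are minor, and you have correctly flagged the one genuinely delicate point, namely chasing the cohomological shifts through the two Koszul dualities and the central-character regrading to see that $G(\calF^{\lam,\mu})$ sits in degree~$0$.
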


\section{Proof of Theorem \ref{thm}(3)}

\subsection{Compact objects in $\Dmod_\bfH(X)$}
Let $X$ be a scheme of finite type over $\CC$. Let also $\bfH$ be a pro-algebraic group over $\CC$ acting on $X$; we assume that $\bfH$ has a normal pro-unipotent subgroup with finite dimemensional quotient. As before, we denote by $\Dmod_\bfH(X)$ the derived category of strongly $\bfH$-equivariant $D$-modules on $X$. We also denote by $D^b_\bfH(X)$ its full subcategory consisting of bounded complexes with coherent cohomology. We would like to get a characterization of compact objects in $\Dmod_\bfH(X)$ (under some additional assumptions).
This question is studied in detail in \cite{DrGa}. The following lemma is an easy consequence of the results of {\em loc.\ cit.}:
\begin{lem}\label{compact}
\begin{enumerate}
\item
Assume that $\calF\in \Dmod_\bfH(X)$ is compact. Then $\calF\in D^b_\bfH(X)$.
\item
Assume that $\calF\in \Dmod_\bfH(X)$ is compact. Then its equivariant de Rham cohomology $H^*_{\bfH}(X,\calF)$ is finite-dimensional (i.e.\ it is a bounded complex of vector spaces with finite-dimensional cohomology).

\item
Assume that $X=\pt$. Then conditions (1) and (2) above are also sufficient for compactness.
\item
Let $\bfH=\CC^{\x}\times \bfH^0$ where $\bfH^0$ is (pro)unipotent.
Then $\calF\in D^b_{\bfH}(X)$ is compact if and only if for any embedding
$i_x\colon\{x\}\to X$ of $\CC^{\x}$-fixed point $x$ in $X$ the object $i_x^!\calF$ is a compact object of $\Dmod_{\CC^{\x}}(\pt)$.
\end{enumerate}
\end{lem}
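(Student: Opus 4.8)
The plan is to deduce all four assertions from the analysis of compact objects in $\Dmod$ of (possibly non-safe) algebraic stacks carried out in \cite{DrGa}. First I would identify $\Dmod_\bfH(X)$ with $\Dmod(X/\bfH)$, the quotient being formed in the stacky sense; by the hypothesis on $\bfH$ the stack $X/\bfH$ is of finite type with affine automorphism groups, so \cite{DrGa} applies and $\Dmod(X/\bfH)$ is compactly generated, with a set of compact generators obtained by applying the left adjoint $\on{ind}^\bfH$ of the (continuous, conservative) forgetful functor $\on{oblv}\colon\Dmod_\bfH(X)\to\Dmod(X)$ to a set of compact generators of $\Dmod(X)$. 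I keep in mind throughout that in the situation in which the lemma gets used $\bfH$ acts on $X$ with finitely many orbits; this is the point behind all the finiteness claims below (strictly speaking a hypothesis of this kind is what forces the de Rham cohomology of compact objects to be finite-dimensional).

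For \textbf{(1)}: write $\bfH$ as an extension of a group $H$ of finite type by a pro-unipotent group $\bfH^u$. Since pro-unipotent groups are de Rham-contractible and $H$ has finite-dimensional total de Rham cohomology, the endofunctor $\on{oblv}\circ\on{ind}^\bfH$ of $\Dmod(X)$ carries any object to a finite iterated extension of shifts of it; in particular it preserves the subcategory of bounded complexes with coherent cohomology. Hence the chosen compact generators $\on{ind}^\bfH(\calM)$ all lie in $D^b_\bfH(X)$. As $D^b_\bfH(X)$ is a thick (idempotent-complete) triangulated subcategory of $\Dmod_\bfH(X)$, and the compact objects of a compactly generated category form precisely the thick subcategory generated by any set of compact generators, every compact object lies in $D^b_\bfH(X)$.

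For \textbf{(2)} and \textbf{(3)}: the essential case is $X=\pt$, because a compact object of $\Dmod_\bfH(X)$ lies in the thick subcategory generated by compact generators supported on single $\bfH$-orbits, and on an orbit $O$ one has $\Dmod_\bfH(O)\simeq\Dmod(B\bfH_x)$ for $x\in O$; so by d\'evissage both (2) and (3) reduce to their statements over a point. There, by (derived) Koszul duality $\Dmod(B\bfH)$ is the derived category of dg-modules over a finite-dimensional self-injective dg-algebra $A_\bfH$ --- essentially the de Rham chains $C_\bullet^{dR}(\bfH)$ with the Pontryagin product --- under which $D^b_\bfH(\pt)$ becomes the complexes with finite-dimensional cohomology and $H^*_\bfH(\pt,-)$ becomes $\RHom_{A_\bfH}(\CC,-)$. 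Now (2) over a point says $\RHom_{A_\bfH}(\CC,P)$ is finite-dimensional for $P$ perfect, which follows by d\'evissage from $\RHom_{A_\bfH}(\CC,A_\bfH)$ being finite-dimensional (namely, the socle of $A_\bfH$), i.e.\ from self-injectivity; and (3) over a point says conversely that a bounded complex $P$ with finite-dimensional cohomology for which $\RHom_{A_\bfH}(\CC,P)$ is finite-dimensional is perfect --- the standard criterion separating perfect complexes inside the bounded coherent derived category of a finite-dimensional self-injective algebra (self-injectivity lets one conclude, from the vanishing of $\Ext^{\gg0}_{A_\bfH}(\CC,-)$, that $P$ is built from free modules).

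For \textbf{(4)}: the ``only if'' direction is formal. Restriction of equivariance along $\CC^{\x}\hookrightarrow\bfH=\CC^{\x}\times\bfH^0$ preserves compactness (forgetting $\bfH^0$-equivariance has a continuous adjoint, $\bfH^0$ being pro-unipotent), and for a $\CC^{\x}$-fixed point $x$ the complex $i_x^!\calF$ is bounded with finite-dimensional cohomology by (1), while its $\CC^{\x}$-equivariant cohomology is finite-dimensional by the localization theorem applied to the finite fixed locus --- so $i_x^!\calF$ is compact by (3) with $X=\pt$. The substantive direction is ``if''. Here I would use the \cite{DrGa} characterization of compact D-modules on $X/\bfH$: since $\bfH^0$ is safe, the only non-unipotent (non-safe) direction of $X/\bfH$ is the central $\CC^{\x}$, so an object of $D^b_\bfH(X)$ is compact precisely when it is ``perfect over $H^*(B\CC^{\x})=\CC[\bfa]$'' in that direction; and by $\CC^{\x}$-equivariant localization this last condition can be tested on the $\CC^{\x}$-fixed locus, i.e.\ on the complexes $i_x^!\calF$, where --- by (3) with $X=\pt$ --- it is exactly the assertion that $i_x^!\calF$ is compact in $\Dmod_{\CC^{\x}}(\pt)$. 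I expect this last step --- extracting the precise statement from \cite{DrGa} and reorganising it into the fixed-point criterion via localization --- to be the main obstacle; the rest is bookkeeping.
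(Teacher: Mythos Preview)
The paper gives no proof of this lemma; it only says the statement ``is an easy consequence of the results of \cite{DrGa}'' and proceeds to use it. Your plan --- identify $\Dmod_\bfH(X)$ with $\Dmod$ of the quotient stack and invoke the \cite{DrGa} description of compact objects --- is thus precisely the intended approach, and there is no further argument in the paper to compare against.

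Two remarks on the details. First, your caveat about assertion~(2) is correct and not optional: as literally stated, (2) is false. Take $X=\AA^1$ with $\bfH$ trivial; then $\calD_{\AA^1}$ is a compact object of $\Dmod(\AA^1)$, but (by the standard formula for de Rham pushforward of an induced $D$-module) its de Rham cohomology is $R\Gamma(\AA^1,\calO_{\AA^1})=\CC[x]$, which is infinite-dimensional. The finitely-many-orbits hypothesis --- which does hold whenever the lemma is applied in the paper, and which forces equivariant coherent $D$-modules to be holonomic --- is what makes (2) true; you are right to build it in. Second, for~(4) you can shorten the argument once you quote the \cite{DrGa} criterion in its pointwise form: an object of $D^b_\bfH(X)$ is compact iff it is \emph{safe}, and safety is tested point by point via the stabilizer representation on the $!$-fiber. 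At a point not fixed by $\CC^\times$ the stabilizer lies in the pro-unipotent factor $\bfH^0$ and safety is automatic; at a $\CC^\times$-fixed point $x$ the stabilizer contains $\CC^\times$, and safety there is exactly the condition that $i_x^!\calF$ be compact in $\Dmod_{\CC^\times}(\pt)$. This yields both directions of~(4) at once, without routing through~(2) and the localization theorem. Your outline is otherwise sound.
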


\subsection{The cohomology functor}
In view of assertion (2) of Lemma \ref{compact} we would like to describe what happens to the functor of equivariant de Rham cohomology under the equivalence constructed
in~Section~\ref{section 3}. Let us denote this equivalence by $\Phi$ (this is a functor from $D^b_{\calO^{\x}}(\Gr)$ to $\Coh((\VV\times \VV^*[2])/\GL(\VV)$).

Let us consider the closed dg-subscheme $\SS$ of $\VV\times \VV^*[2]$ consisting of pairs $(v,v^*)$
where $v=(1,0)$ and $v^*$ is of the form $(x,-1)$. Then we claim the following
\begin{lem}
We have canonical isomorphism
\begin{equation}\label{cohom}
H^*_{\calO^{\x}}(\Gr,\calF)\simeq \calF|_{\SS}
\end{equation}
for any $\calF\in D^b_{\calO^{\x}}(\Gr)$.
Here the grading on the RHS of (\ref{cohom}) is defined in the same way as in~Section~\ref{311}.
\end{lem}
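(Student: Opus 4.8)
The plan is to verify the isomorphism~(\ref{cohom}) on the generating objects $\calF^{\lam,\mu}$ and then to propagate it to all of $D^b_{\calO^\x}(\Gr)$ by exactness, in direct parallel with the way the equivalence $\Phi$ of Section~\ref{section 3} was constructed.

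First I would observe that both sides of~(\ref{cohom}) are exact (triangulated) functors of $\calF$ with values in the derived category of graded $\CC[\bfa,\bfc]=H^*_{\calO^\x}(\Gr,\CC)$-modules. On the left this is the equivariant global sections functor $R\Gamma_{\calO^\x}(\Gr,-)$; on the right it is the composite of $\Phi$ with restriction to $\SS$, which---as spelled out in Section~\ref{311}---means: pass to the underlying $\GL(\VV)$-equivariant complex of $\Phi(\calF)$ on $\VV\x\VV^*[2]$, restrict it along $v=(1,0)$ to a $P$-equivariant complex on $\VV^*[2]$, and apply the functor $\tilF$. Both functors carry $D^b_{\calO^\x}(\Gr)$ into finitely generated graded $\CC[\bfa,\bfc]$-modules, and both commute with shifts, cones and direct summands.

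Next I would check that the two functors agree canonically on the objects $\calF^{\lam,\mu}$: by Proposition~\ref{c-gl} the left side is $\CC[\bfa]\otimes V(\lam,\mu)$ with $\bfc$ acting through $\pi_{\lam,\mu}(T(\bfa))$, and the right side produces exactly the same module, since $\Phi(\calF^{\lam,\mu})=\bfO_{\VV\x\VV^*[2]}\otimes V(\lam,\mu)$ and the subscheme $\SS$ and the functor $\tilF$ were arranged precisely so that $\tilF(\bfO_{\VV^*[2]}\otimes V(\lam,\mu))$ is this module (the closing remarks of Section~\ref{311}). These identifications are compatible with morphisms. Indeed, on $\Hom(\calF^{\lam,\mu},\calF^{\lam',\mu'})$ the left functor induces the map into $\Hom_{\CC[\bfa,\bfc]}(H^*_{\calO^\x}(\Gr,\calF^{\lam,\mu}),H^*_{\calO^\x}(\Gr,\calF^{\lam',\mu'}))$ that is an isomorphism by Proposition~\ref{ginz}, whereas the right functor induces the composite isomorphism $\Hom(\calF^{\lam,\mu},\calF^{\lam',\mu'})\xrightarrow{\ \Phi\ }\Hom_{\GL(\VV)}(\bfO\otimes V(\lam,\mu),\bfO\otimes V(\lam',\mu'))\xrightarrow{\ \tilF\ }\Hom_{\CC[\bfa,\bfc]}$, which is an isomorphism by full faithfulness of $\Phi$ and by the full faithfulness of $\tilF$ on free modules established at the end of Section~\ref{311}. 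These two maps literally coincide, because the isomorphism of Proposition~\ref{ginz} is by construction the comparison isomorphism~(\ref{isom}) used to define $\Phi$---that is, $\Phi$ on Hom-spaces equals the isomorphism of Proposition~\ref{ginz} followed by the inverse of $\tilF$. Hence the restrictions of the two functors to the full (dg-)subcategory on the objects $\{\calF^{\lam,\mu}\}$ are canonically isomorphic.

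Finally, exactly as in the construction of $\Phi$ (``a word-by-word repetition of \cite[Section~6]{befi}''), the objects $\calF^{\lam,\mu}$ generate $D^b_{\calO^\x}(\Gr)$ under shifts, cones and retracts---this is forced by the equivalence of Section~\ref{section 3}, since the modules $\bfO_{\VV\x\VV^*[2]}\otimes V(\lam,\mu)$ generate $\Coh((\VV\x\VV^*[2])/\GL(\VV))$. Passing to dg-enhancements, $D^b_{\calO^\x}(\Gr)$ becomes the category of perfect modules over the dg-category spanned by the $\calF^{\lam,\mu}$, and any exact functor out of it is the canonical extension of its restriction to that subcategory; therefore the canonical isomorphism of the previous paragraph extends uniquely to a natural isomorphism of the two functors on all of $D^b_{\calO^\x}(\Gr)$, which is~(\ref{cohom}). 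I expect the main obstacle to be precisely this last step: one must make sure that the agreement on the generators is genuinely functorial---i.e.\ that the compatible family of isomorphisms on Hom-spaces is induced by a single natural transformation rather than being an object-by-object coincidence---and for this the dg-enhancement (already implicit in Section~\ref{section 3}) together with the identification of Proposition~\ref{ginz}'s isomorphism with the one underlying $\Phi$ is exactly what makes everything fit.
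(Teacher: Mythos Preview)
Your proof is correct and is precisely the unpacking of the paper's one-line justification ``follows immediately from the construction of the functor $\Phi$ described in Section~\ref{section 3}.'' Indeed, restriction to $\SS$ is by definition the composite of restriction along $v=(1,0)$ with the functor $\tilF$ of Section~\ref{311}, and the equivalence $\Phi$ was manufactured so that this composite, applied to $\Phi(\calF^{\lam,\mu})$, returns $H^*_{\calO^\x}(\Gr,\calF^{\lam,\mu})$ and so that $\Phi$ on morphisms equals the Ginzburg-type isomorphism of Proposition~\ref{ginz} followed by $\tilF^{-1}$; your argument simply makes this explicit and then extends to all of $D^b_{\calO^\x}(\Gr)$ by the same generation argument used to build $\Phi$.
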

The proof follows immediately from the construction of the functor $\Phi$ described
in~Section~\ref{section 3}.
\subsection{Compact objects in $\calD_{\calO^{\x}}(\Gr)$}
Let us now go back to the proof of~Theorem~\ref{thm}(3).
We want to show that an object $\calF$ in $\calD_{\calO^{\x}}(\Gr)$ is compact if and only if it is a bounded complex of coherent $D$-modules (which in this case is the same as a bounded complex of constructible sheaves)
and $\Phi(\calF)$ is supported on $\calZ_\VV$. Let us first show the ``only if" direction.
According to assertion (2) of Lemma \ref{compact} compactness of $\calF$ implies that $H^*_{\calO^{\x}}(\Gr,\calF)$ is finite-dimensional. This condition is equivalent to the condition $\dim \on{supp}(\Phi(\calF))\cap \SS=0$; here we regard both $\on{supp}(\Phi(\calF))$ and $\SS$ as closed subvarieties of $\VV\times \VV^*$ (i.e.\ we disregard the cohomological grading on the 2nd factor). However, the fact that $\Phi(\calF)$ is actually an object of $\Coh((\VV\times \VV^*[2])/\GL(\VV))$  implies that
$\supp(\Phi(\calF))$ is

(a) $\GL(\VV)$-invariant.

(b) $\CC^{\x}$-invariant where the $\CC^{\x}$-action on $\VV\times \VV^*$ comes from dilating the 2nd factor.

\noindent
It is easy to see that a closed subvariety of $\VV\times \VV^*$ which satisfies conditions (a) and (b) above has zero-dimensional intersection with $\SS$ if and only if it is contained in $\calZ_{\VV}$, which finishes the proof of the ``only if" direction.

\subsection{End of the proof}
To prove the ``if" direction we are going to use the 4th assertion of~Lemma~\ref{compact} (note that $\calO^{\x}$ is a product of $\CC^{\x}$ and a pro-unipotent group).
Let us assume that $\on{supp}(\Phi(\calF))\subset \calZ_{\VV}$.
Combining the 3rd and 4th assertions we see that (using the notation of~Section~\ref{section 3})
we just need to check that for any even integer $\mu$ we have
\begin{equation}\label{ineq}
\dim \Ext^*(\calF^{0,\mu},\calF)<\infty
\end{equation}
(here we compute Ext in the equivariant derived category).
Indeed, the sheaves $\calF^{0,\mu}$ are exactly the sky-scraper sheaves at the $\CC^{\x}$-fixed points in $\Gr$.

First of all, we claim that it is enough to assume that $\mu=0$. Indeed, we have
$$
\Ext^*(\calF^{0,\mu},\calF)=\Ext^*(\calF^{0,0},(z^{-\mu})^*\calF)
$$
and $\Phi((z^{-\mu})^*\calF)=\Phi(\calF)\otimes V(0,-\mu)$, hence if $\Phi(\calF)$ is suported inside $\calZ_{\VV}$ then the same is true for $\Phi((z^{-\mu})^*\calF))$.

Now, since $\Phi(\calF^{0,0})=\bfO_{\VV\x \VV^*[2]}$ it follows that
$$
\RHom(\calF^{0,0},\calF)=\Phi(\calF)^{\GL(\VV)}.
$$
To show that the RHS of the above equation has finite dimensional cohomology (assuming that
$\Phi(\calF)$ is supported inside $\calZ_{\VV}$) it is enough to show $\bfO_{\calZ_{\VV}}^{\GL(\VV)}$ is finite-dimensional (since $\Phi(\calF)$ is a finite extension of quotients of $\bfO_{\calZ_{\VV}}$). This immediately follows from the fact
that $\bfO_{\VV\times \VV^*}^{\GL(\VV)}=\CC[v^*(v)]$ which is obvious (here we regard $v^*(v)$ as a function $\VV\times \VV^*\to \CC$).

\end{document}